\definecolor{shadecolor}{RGB}{248,248,248}
\newenvironment{Shaded}{\begin{snugshade}}{\end{snugshade}}
\newcommand{\BuiltInTok}[1]{#1}
\newcommand{\CommentTok}[1]{\textcolor[rgb]{0.56,0.35,0.01}{\textit{#1}}}
\newcommand{\ControlFlowTok}[1]{\textcolor[rgb]{0.13,0.29,0.53}{\textbf{#1}}}
\newcommand{\DecValTok}[1]{\textcolor[rgb]{0.00,0.00,0.81}{#1}}
\newcommand{\FloatTok}[1]{\textcolor[rgb]{0.00,0.00,0.81}{#1}}
\newcommand{\ImportTok}[1]{#1}
\newcommand{\KeywordTok}[1]{\textcolor[rgb]{0.13,0.29,0.53}{\textbf{#1}}}
\newcommand{\NormalTok}[1]{#1}
\newcommand{\OperatorTok}[1]{\textcolor[rgb]{0.81,0.36,0.00}{\textbf{#1}}}
\def\maxwidth{\ifdim\Gin@nat@width>\linewidth\linewidth\else\Gin@nat@width\fi}
\def\maxheight{\ifdim\Gin@nat@height>\textheight\textheight\else\Gin@nat@height\fi}
\providecommand{\tightlist}{%
  \setlength{\itemsep}{0pt}\setlength{\parskip}{0pt}}
\let\oldparagraph\paragraph
\renewcommand{\paragraph}[1]{\oldparagraph{#1}\mbox{}}
\let\oldsubparagraph\subparagraph
\renewcommand{\subparagraph}[1]{\oldsubparagraph{#1}\mbox{}}
\def\fps@figure{htbp}
\newtheorem{proposition}{Proposition}[section]
\theoremstyle{definition}
\theoremstyle{definition}
\theoremstyle{definition}
\theoremstyle{remark}
\newcommand{\bb}{\boldsymbol{b}}
\newcommand{\xx}{\boldsymbol{x}}
\newcommand{\XX}{\boldsymbol{X}}
\newcommand{\aal}{\boldsymbol{\alpha}}
\newcommand{\RR}{\boldsymbol{R}}
\newcommand{\bs}{\backslash}
\title{Recursive, parameter-free, explicitly defined interpolation nodes for simplices}
\author{Tobin Isaac\footnote{\href{mailto:tisaac@cc.gatech.edu}{\nolinkurl{tisaac@cc.gatech.edu}}, School of Computational Science and Engineering, Georgia Tech.}}
\date{}
\begin{document}
\maketitle
\begin{abstract}
A rule for constructing interpolation nodes for \(n\)th degree polynomials on
the simplex is presented. These nodes are simple to define recursively from
families of 1D node sets, such as the Lobatto-Gauss-Legendre (LGL) nodes.
The resulting nodes have attractive properties: they are fully symmetric,
they match the 1D family used in construction on the edges of the simplex,
and the nodes constructed for the \((d-1)\)-simplex are the boundary traces of
the nodes constructed for the \(d\)-simplex. When compared using the Lebesgue
constant to other explicit rules for defining interpolation nodes, the nodes
recursively constructed from LGL nodes are nearly as good as the \emph{warp \&
blend} nodes of \textcite{Warb06} in 2D (which, though defined differently, are very
similar), and in 3D are better than other known explicit rules by increasing
margins for \(n > 6\). By that same measure, these recursively defined nodes
are not as good as implicitly defined nodes found by optimizing the Lebesgue
constant or related functions, but such optimal node sets have yet to be
computed for the tetrahedron. A reference python implementation has been
distributed as the \texttt{recursivenodes} package, but the simplicity of the
recursive construction makes them easy to implement.
\end{abstract}

\renewbibmacro*{doi+eprint+url}{%
  \iftoggle{bbx:doi}
    {\printfield{doi}}
    {}%
\newunit\newblock
  \iftoggle{bbx:eprint}
    {\usebibmacro{eprint}}
    {}%
\newunit\newblock
  \iftoggle{bbx:url}
        {%
      \iffieldundef{doi}
{\usebibmacro{url+urldate}}
        {%
          \clearfield{url}%
        }%
    }
    {}}

\hypertarget{definition-of-the-recursive-rule}{%
\section{Definition of the recursive rule}\label{definition-of-the-recursive-rule}}

The motivating example for this work is the use of Lagrange polynomials as
shape functions for the finite element approximation space
\(\mathcal{P}_n(\Delta^d)\): polynomials of degree at most \(n\) on the
\(d\)-simplex. A Lagrange polynomial basis \(\Phi_X = \{\varphi_{i}\} \subset \mathcal{P}_n(\Delta^d)\) is defined by a set of \emph{interpolation nodes} \(X = \{\xx_{i}\} \subset \Delta^d\) as \(\Phi_X = \{ \varphi_{i} \in \mathcal{P}_n(\Delta^d): \varphi_{i}(\xx_{j}) = \delta_{ij} \}\). While some of the properties of an implementation of the finite
element method depend only on the approximation space, the basis used, whether Lagrange or other, can
affect the convergence, numerical stability, and computational efficiency of the method.
Convergence is affected by the way the basis is used to approximate data,
numerical stability by presence of round-off errors, in the construction of both the basis and the resulting systems of equations, and computational efficiency by the complexity of common tasks like applying a mass or derivative matrix.
Discussion of each of these aspects follows the definition of the
interpolation nodes that are the main contribution of this work.

The nodes are dimensionally recursive, building from points on the interval
\([0,1]\). A \emph{1D node set} is a set of points \(X_n = \{x_{n,i}\}_{i=0}^n \subset [0,1]\) that is increasing and symmetric about \(1/2\), \(x_{n,i} = 1 - x_{n,n-i}\). A \emph{1D node family} is a collection \(\XX = \{X_n\}_{n\in \mathbb{N}_0}\). Examples include equispaced nodes, symmetric
Gauss-Jacobi quadrature nodes, and symmetric Lobatto-Gauss-Jacobi quadrature
nodes.

The new nodes are naturally defined on the barycentric \(d\)-simplex,
\[\Delta^d_{\text{bary}} = \{ \bb = (b_0, \dots, b_d) \in
\mathbb{R}^{d+1}_+ : {\textstyle \sum_i}\, b_i = 1\},\]
and are naturally indexed by the multi-indices
\[A^d_n = \{ \aal = (\alpha_0, \dots, \alpha_d) \in
\mathbb{N}_0^{d+1} : |\aal| = n \}.\]
This work uses the standard notation \(|\aal| = \sum_i \alpha_i\), and further defines:

\begin{itemize}
\tightlist
\item
  \(\#\xx\) as the length of a tuple (multi-index or vector),
\item
  \(\xx_{\bs i}\) as the tuple formed by
  removing the \(i\)th element, and
\item
  \(\xx_{+i}\) as the augmentation of a tuple by inserting a zero for
  the \(i\)th element.
\end{itemize}

Given a 1D node family \(\XX\), the recursive definition of the interpolation node
\(\bb_{\XX}(\aal) \in \Delta^{\#\aal-1}_{\text{bary}}\)
is
\begin{equation}
\label{eq:recursive}
\bb_{\XX}(\aal) =
\begin{cases}
(1), & \#\aal = 1, \\
\frac{\displaystyle {\textstyle \sum_i}\, x_{|\aal|,|\aal_{\bs i}|}
\bb_{\XX}(\aal_{\bs i})_{+i}}
{\displaystyle {\textstyle \sum_i}\, x_{|\aal|,|\aal_{\bs i}|}}, & \#\aal > 1.
\end{cases}
\end{equation}
The full \(d\)-simplex node set is
\begin{equation}
R^d_{\XX,n} = \{\bb_{\XX}(\aal): \aal \in A^d_n\},
\end{equation}
and the full \(d\)-simplex node family is
\begin{equation}
\RR^d_{\XX} = \{R^d_{\XX,n}\}_{n\in\mathbb{N}}.
\end{equation}
Unless otherwise specified, the 1D node family \(\XX\) is taken to be
the Lobatto-Gauss-Legendre (LGL) family
\(\XX_{\text{LGL}}\). Some examples are illustrated in Fig.~\ref{fig:illus}.

\begin{figure}
\centering
\includegraphics{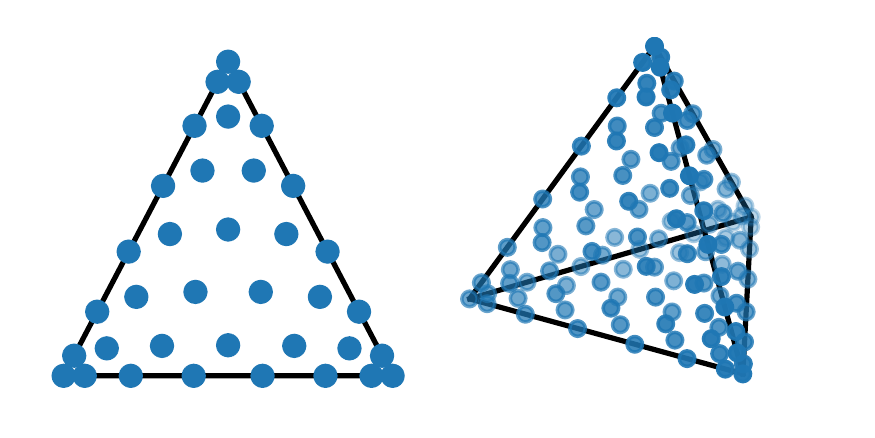}
\caption{\label{fig:illus}The nodes \(R^2_{\XX,7}\) and \(R^3_{\XX,7}\), mapped to equilateral simplices.}
\end{figure}

\hypertarget{intuition}{%
\section{Intuition behind the recursive rule}\label{intuition}}

\textcite{BlPo06} observed that Fekete points for the triangle, which have good
interpolation properties, points in the interior nearly project onto LGL nodes on
the edges of the triangle:

\begin{quote}
Some intriguing observations can be made regarding the location of some of
the Fekete points in a given set. {[}\ldots{}{]} If an imaginary line is drawn
through the nodes {[}\ldots{}{]} the two Fekete nodes sit on this line, close to the
two zeros of the second Lobatto polynomial, Lo2, scaled by the length of the
imaginary line.
\end{quote}

From this comes the idea that, if a good family of interpolation nodes on the
\((d-1)\)-simplex are already known, a heuristic for locating the node
\(\bb_{\XX}(\aal)\) in the \(d\)-simplex is to choose a point whose
projection onto each facet from the opposite vertex is one of those good nodes,
\begin{equation}
\frac{\bb_{\XX}(\aal)_{\bs i}}{1 - b_{\XX,i}(\aal)} = \bb_{\XX}(\aal_{\bs i}), \quad \forall\ i\in \{0, \dots, d\}.
\label{eq:overdetermined}
\end{equation}

Unfortunately, this is an overdetermined set of requirements.

Consider for example the placement of the interpolation node with multi-index
\(\aal = (1,2,3)\) in the barycentric triangle (this is one of the nodes
for \(n=|\aal|=6\)). The LGL nodes are good interpolation nodes, so the desire is for \(\bb_{\XX}((1,2,3))\) to project onto the LGL nodes
associated with the multi-indices \((2,3)\) (one of the nodes for \(n=2+3=5\)),
\((1,3)\) (\(n=4\)), and \((1,2)\) (\(n=3\)), as illustrated in Fig.~\ref{fig:lglproj} (right).
The projection lines nearly intersect at one
point, but not quite. The system
\eqref{eq:overdetermined} has a solution if \(\XX\) is the family of equispaced nodes, and the
solution is an equispaced node in the triangle, as seen in Fig.~\ref{fig:lglproj} (left).

\begin{figure}
\centering
\includegraphics{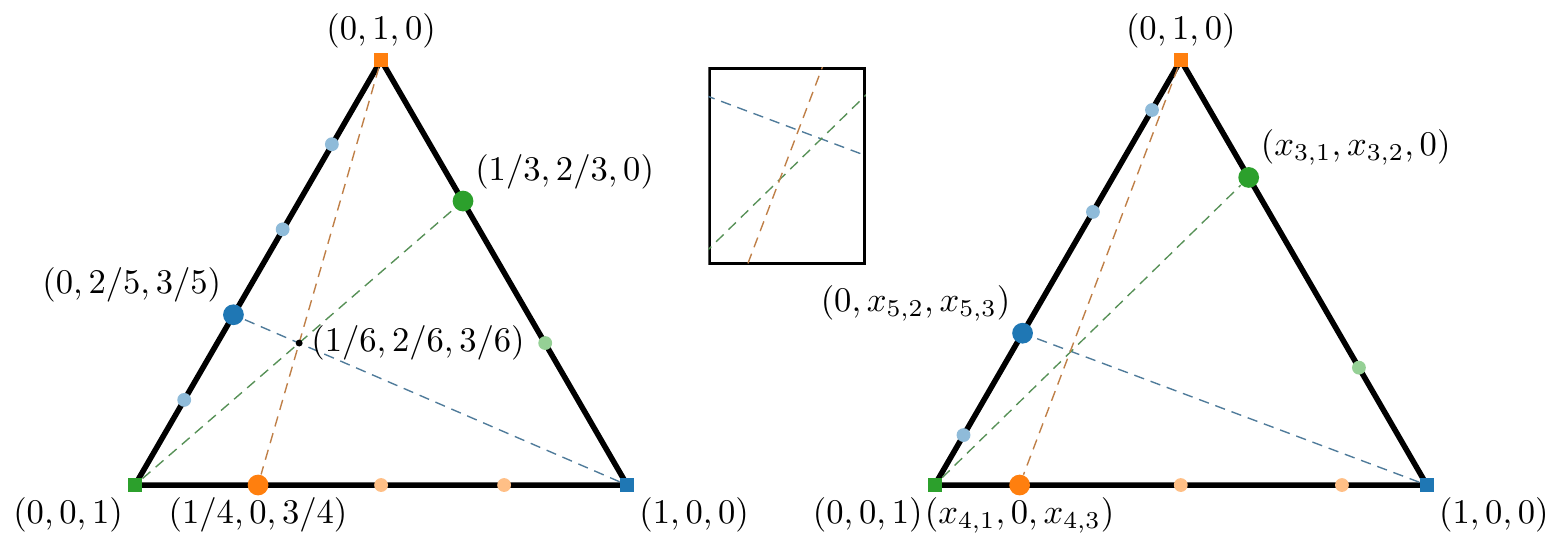}
\caption{\label{fig:lglproj}Desired projections for \(\bb_{\XX}((1,2,3))\). If \(\XX\) is the equispaced 1D node family (left), the projections meet at an equispaced node in the triangle. If \(\XX\) is the LGL 1D node family (right), the lines do not actually meet at one point (detail, 32x magnified).}
\end{figure}

Any point in the interior of the triangle is in the convex hull of its
projections onto the edges, so if a node location does satisfy
\eqref{eq:overdetermined}, then it can be expressed as a barycentric
combination of its projections. The equispaced nodes of the triangle not only have projections that are equispaced nodes on the edges, but their barycentric weights have a remarkable property.\\
\begin{proposition}
\protect\hypertarget{prp:bary}{}{\label{prp:bary} }Let the barycentric coordinates of the equispaced node associated with
\(\aal = (\alpha_0, \alpha_1, \alpha_2)\), \(|\aal| = n\), be
\(\bb = (\alpha_0 / n, \alpha_1 / n, \alpha_2 / n)\). Let its projections onto the edges be
\[
\begin{aligned}
\bb_0 &= (0, \alpha_1 / (n - \alpha_0), \alpha_2 / (n - \alpha_0)),\\
\bb_1 &= (\alpha_0 / (n - \alpha_1), 0, \alpha_2 / (n - \alpha_1)), \\
\bb_2 &= (\alpha_0 / (n - \alpha_2), \alpha_1 / (n - \alpha_2), 0).
\end{aligned}
\]
Then \(\bb\) is a convex combination of \(\bb_0\), \(\bb_1\), and \(\bb_2\) with
(unnormalized) barycentric weights
\((1 - \alpha_0 / n) : (1 - \alpha_1 / n) : (1 - \alpha_2 / n)\).
\end{proposition}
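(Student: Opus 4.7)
My plan is a direct verification: I would simply compute the barycentric combination prescribed by the weights and check that it equals $\bb$ coordinate-by-coordinate. There is no geometric cleverness required once the key algebraic identity is spotted.

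First I would compute the normalizing sum of the weights. Since $|\aal| = n$,
\[
(1 - \alpha_0/n) + (1 - \alpha_1/n) + (1 - \alpha_2/n) = 3 - (\alpha_0 + \alpha_1 + \alpha_2)/n = 2,
\]
so the claim reduces to showing
\[
2\bb = (1 - \alpha_0/n)\bb_0 + (1 - \alpha_1/n)\bb_1 + (1 - \alpha_2/n)\bb_2.
\]

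Next I would observe the key cancellation: for any $i \neq j$, using $1 - \alpha_j/n = (n - \alpha_j)/n$,
\[
(1 - \alpha_j/n) \cdot \frac{\alpha_i}{n - \alpha_j} = \frac{\alpha_i}{n}.
\]
This is the whole engine of the proof. By symmetry it suffices to verify one coordinate, say the 0th. Since $\bb_0$ has a zero in its 0th slot, the 0th coordinate of the right-hand side is
\[
(1 - \alpha_1/n)\cdot \frac{\alpha_0}{n - \alpha_1} + (1 - \alpha_2/n)\cdot \frac{\alpha_0}{n - \alpha_2} = \frac{\alpha_0}{n} + \frac{\alpha_0}{n} = 2\cdot \frac{\alpha_0}{n},
\]
matching the 0th coordinate of $2\bb$. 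The 1st and 2nd coordinates follow by permuting indices.

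There is no real obstacle here: the proposition is essentially the statement that the above one-line cancellation happens, together with the fact that the weights sum to $2$. The only thing to be careful about is that the denominators $n - \alpha_i$ are nonzero, which is fine because if $\alpha_i = n$ then the corresponding $\bb_i$ is undefined and the weight $(1 - \alpha_i/n)$ is $0$; in that degenerate case $\bb$ is a vertex and the identity is trivial (or one simply restricts to the nondegenerate $\alpha_i < n$ case).
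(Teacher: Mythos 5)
Your verification is correct: the weights sum to $2$, the cancellation $(1-\alpha_j/n)\cdot\alpha_i/(n-\alpha_j)=\alpha_i/n$ gives each coordinate, and you rightly flag the degenerate case $\alpha_i=n$. The paper states this proposition without any proof at all, treating it as an evident computation, and your direct coordinate-by-coordinate check is exactly the argument it implicitly relies on.
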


In Proposition \ref{prp:bary}, the barycentric weights describing equispaced
nodes in the triangle are themselves 1D equispaced nodes on the edge (see
Fig.~\ref{fig:equibary}, left).
By analogy, a heuristic for approximating a solution to the overdetermined system \eqref{eq:overdetermined} is to use the same 1D node family \(\XX\)
that was used for the projection points as barycentric weights for combining them (see Fig.~\ref{fig:equibary}, right), which restates \eqref{eq:recursive}.

\begin{figure}
\centering
\includegraphics{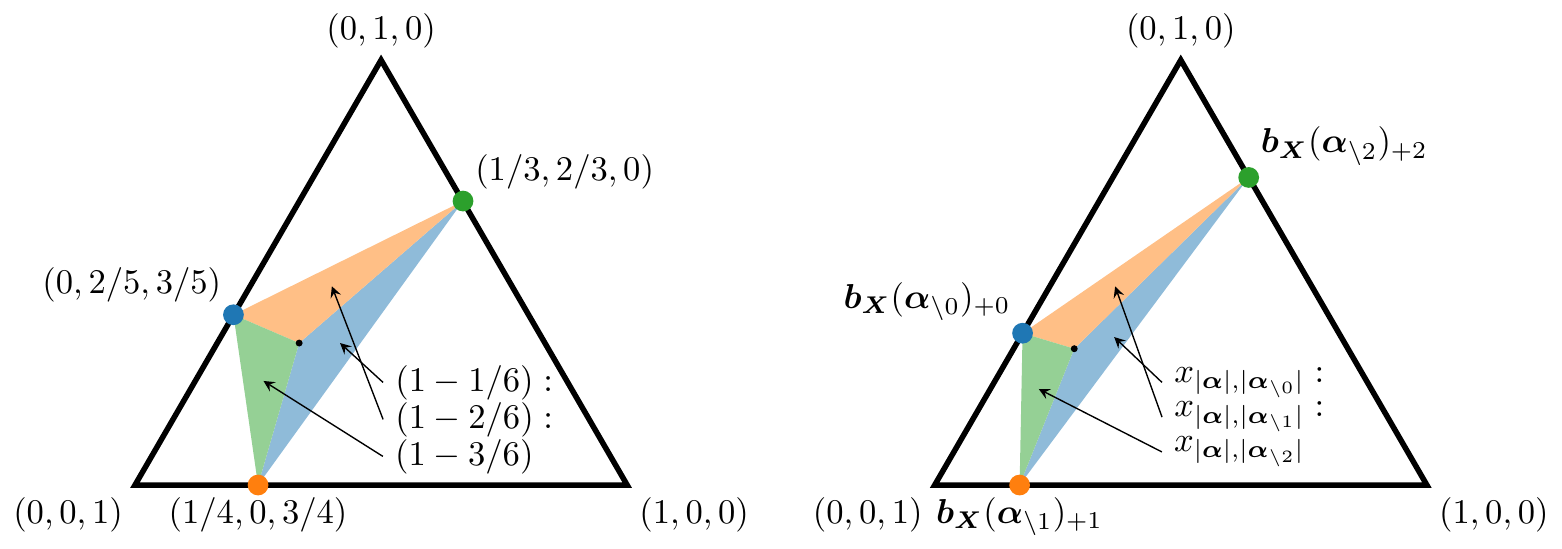}
\caption{\label{fig:equibary}Defining \(\bb_{\XX}((1,2,3))\) by barycentric coordinates relative to the projection points. If \(\XX\) is the equispaced 1D node family (left), it is the same as the point as the intersection of projection lines (see Fig.~\ref{fig:lglproj}). If \(\XX\) is an arbitrary 1D node family (right), it is the recursive rule \eqref{eq:recursive}.}
\end{figure}

\hypertarget{comparison}{%
\section{Comparison to other node families}\label{comparison}}

The recursive rule \eqref{eq:recursive} generates node families
\(\RR^d_{\XX}\) for the \(d\)-simplex in each dimension. This
section compares them to other node families with respect to several metrics
that are relevant to finite element computations.

\hypertarget{symmetry}{%
\subsection{Boundary and symmetry properties}\label{symmetry}}

The nodes \(R^{d}_{\XX,n}\) have three non-numerical properties
that make them convenient to use when implementing the finite element method.

\begin{enumerate}
\def\labelenumi{\Roman{enumi}.}
\item
  \textbf{Symmetry:} The symmetry group of the \(d\)-simplex is the group \(S_{d+1}\):
  for \(\Delta^{d}_{\text{bary}}\), each symmetry corresponds to a permutation of
  the coordinates. It is clear that the recursive rule \eqref{eq:recursive}
  respects these symmetries, that \(\bb_{\XX} (\sigma (\aal)) = \sigma(\bb_{\XX} (\aal))\). This is useful when a \(d\)-simplex is viewed from
  multiple orientations, such as when it is the interface between cells.
\item
  \textbf{Equivalence to \(\XX\) when \(d=1\):} The node sets in
  \(\XX\) must be symmetric about \(1/2\), so if \(\aal = (\alpha_0, \alpha_1)\) then \(\sum_i x_{|\aal|,|\aal_{\bs i}|} = x_{|\aal|,\alpha_1} + x_{|\aal|,\alpha_0} = 1.\)
  The recursive rule \eqref{eq:recursive} then becomes
  \[
  \bb_{\XX}(\aal) =
  x_{|\aal|,\alpha_1} (0,1) + x_{|\aal|,\alpha_0}
  (1,0) =
  (x_{|\aal|,\alpha_0},x_{|\aal|,\alpha_1}).
  \]
  In other words, \(R^1_{\XX,n}\) is the 1D node set \(X_n\) mapped to the
  barycentric line \(\Delta^1_{\text{bary}}\).
\item
  \textbf{Recursive boundary traces:} Problems solved by the finite element
  method can have forms computed over surfaces: data for Neumann boundary
  conditions or jump terms in discontinuous Galerkin methods, for example. A
  good node set should induce good shape functions for \(\mathcal{P}_n(\Delta^d)\),
  but also for the trace spaces on the boundary facets, which are embeddings of
  \(\mathcal{P}_n(\Delta^{d-1})\).
\end{enumerate}

The following proposition show that if the 1D node family
\(\XX\) has nodes at the endpoints, then
\(R^{d}_{\XX,n}\) has nodes on each boundary facet of
\(\Delta^{d}_{\text{bary}}\) that are the \(R^{d-1}_{\XX,n}\)
nodes mapped onto that facet, and so they are appropriate for defining
Lagrange polynomials on the trace space.\\
\begin{proposition}
\protect\hypertarget{prp:trace}{}{\label{prp:trace} }Let \(\XX\) be a 1D node family such that
\(x_{n,0} = 0\) and \(x_{n,n} = 1\) for all \(n \geq 1\). Let \(\aal\)
be a multi-index such that \(|\aal| \geq 1\), \(\#\aal > 1\), and \(\alpha_j = 0\).
Then \(\bb_{\XX}(\aal) = \bb_{\XX}(\aal_{\bs j})_{+j}\).
\end{proposition}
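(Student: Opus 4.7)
The plan is to proceed by induction on $\#\aal$. The base case is $\#\aal = 2$: here $\aal$ has a zero at position $j \in \{0,1\}$, and direct evaluation of the recursion (using the fact that $x_{n,0} = 0$ and $x_{n,n} = 1$) yields $\bb_\XX(\aal)$ equal to the $j$-th standard unit vector, which agrees with $\bb_\XX(\aal_{\bs j})_{+j} = (1)_{+j}$.

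For the inductive step, fix $\aal$ with $\alpha_j = 0$, $\#\aal \geq 3$, and $n := |\aal| \geq 1$. I will split the sum in \eqref{eq:recursive} into the $i = j$ term and the $i \neq j$ terms. Since $\alpha_j = 0$, we have $|\aal_{\bs j}| = n$, so $x_{n,|\aal_{\bs j}|} = x_{n,n} = 1$, and the $i=j$ term contributes $\bb_\XX(\aal_{\bs j})_{+j}$ with weight $1$. For each $i \neq j$, the multi-index $\aal_{\bs i}$ still has a zero component — at position $j' = j$ if $i > j$ and $j' = j-1$ if $i < j$ — and $\#\aal_{\bs i} < \#\aal$, so the inductive hypothesis gives $\bb_\XX(\aal_{\bs i}) = \bb_\XX((\aal_{\bs i})_{\bs j'})_{+j'}$.

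The step that needs care is re-identifying $(\aal_{\bs i})_{\bs j'}$ with $(\aal_{\bs j})_{\bs k}$, where $k$ is the position of $\alpha_i$ inside $\aal_{\bs j}$ ($k = i$ if $i < j$, $k = i-1$ if $i > j$), and correspondingly verifying the insertion identity $(\bb_\XX((\aal_{\bs j})_{\bs k})_{+j'})_{+i} = (\bb_\XX((\aal_{\bs j})_{\bs k})_{+k})_{+j}$. This is just the combinatorial fact that for $p < q$, inserting a zero at $p$ then at $q$ yields the same tuple as inserting at $q-1$ then at $p$; applying it with $\{p,q\} = \{\min(i,j),\max(i,j)\}$ covers both cases $i<j$ and $i>j$. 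Also $x_{n,n-\alpha_i} = x_{n, n - \bar\alpha_k}$ where $\bar\aal := \aal_{\bs j}$, so each $i \neq j$ term is exactly the $k$-th term in the recursion for $\bb_\XX(\bar\aal) = \bb_\XX(\aal_{\bs j})$, only with an extra $+j$ applied.

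With that rewriting in hand, let $S := \sum_k x_{n,n-\bar\alpha_k}$ and $T := \sum_k x_{n,n-\bar\alpha_k}\bigl(\bb_\XX(\bar\aal_{\bs k})_{+k}\bigr)_{+j}$, so by the recursion $\bb_\XX(\aal_{\bs j})_{+j} = T/S$. The numerator of \eqref{eq:recursive} for $\bb_\XX(\aal)$ becomes $\bb_\XX(\aal_{\bs j})_{+j} + T = T/S + T = T(1+S)/S$, while the denominator is $1 + S$. Dividing yields $T/S = \bb_\XX(\aal_{\bs j})_{+j}$, completing the induction. The only real obstacle is the index bookkeeping in the second paragraph; once that insertion identity is stated cleanly, the algebra collapses.
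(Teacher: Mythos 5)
Your proof is correct and follows essentially the same route as the paper's: induction on \(\#\aal\), splitting off the \(i=j\) term (whose weight is \(x_{n,n}=1\)), applying the inductive hypothesis to the \(i\neq j\) terms, commuting the two insertion operations to re-index them as the terms of the recursion for \(\bb_{\XX}(\aal_{\bs j})\), and collapsing the resulting fraction \((T/S+T)/(1+S)=T/S\). The only quibble is a wording slip in the base case (the result is the unit vector with a \emph{zero} in position \(j\), i.e.\ \((1)_{+j}\), not ``the \(j\)-th standard unit vector''), which your own formula already gets right.
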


\begin{proof}
{}If \(\alpha_j = 0\), then \(|\aal_{\bs j}| = |\aal|\),
so \(x_{|\aal|,|\aal_{\bs j}|} = 1\). Therefore,
\begin{align}
\bb_{\XX}(\aal) &=
\frac{\displaystyle {\textstyle \sum_i}\, x_{|\aal|,|\aal_{\bs i}|}
\bb_{\XX}(\aal_{\bs i})_{+i}}
{\displaystyle {\textstyle \sum_i}\, x_{|\aal|,|\aal_{\bs i}|}} \nonumber \\
&=
\frac{\displaystyle x_{|\aal|,|\aal_{\bs j}|}
\bb_{\XX}(\aal_{\bs j})_{+j} + {\textstyle \sum_{i \neq j}}\, x_{|\aal|,|\aal_{\bs i}|}
\bb_{\XX}(\aal_{\bs i})_{+i}}
{\displaystyle x_{|\aal|,|\aal_{\bs j}|} + {\textstyle \sum_{i \neq j}}\, x_{|\aal|,|\aal_{\bs i}|}} \nonumber \\
&=
\frac{\displaystyle \bb_{\XX}(\aal_{\bs j})_{+j} + {\textstyle \sum_{i \neq j}}\, x_{|\aal|,|\aal_{\bs i}|}
\bb_{\XX}(\aal_{\bs i})_{+i}}
{\displaystyle 1 + {\textstyle \sum_{i \neq j}}\, x_{|\aal|,|\aal_{\bs i}|}}.
\label{eq:expansion}
\end{align}
If \(\#\aal = 2\), then
\(x_{|\aal|,|\aal_{\bs i}|} = x_{\alpha_i,0} = 0\) for \(i \neq j\). Then \eqref{eq:expansion} simplifies to
\(\bb_{\XX}(\aal_{\bs j})_{+j}\). This proves the base case.

Now assume the property holds if \(\#\aal \leq d\), and let
\(\#\aal = d + 1\). If \(i \neq j\), then \(\aal_{\bs i}\)
has a zero at an index \(\hat\jmath\in\{j,j-1\}\), so
\[
\bb_{\XX}(\aal_{\bs i})_{+i}
=
\bb_{\XX}(\aal_{\bs i \bs \hat\jmath})_{+\hat\jmath +i}.
\]
The order can be switched: there is \(\hat\imath\in\{i,i-1\}\) such that
\[
\bb_{\XX}(\aal_{\bs i \bs \hat\jmath})_{+\hat\jmath +i}
=
\bb_{\XX}(\aal_{\bs j \bs \hat\imath})_{+\hat\imath +j}.
\]
So by relabeling and using \eqref{eq:recursive} this implies
\[\begin{aligned}
{\textstyle \sum_{i \neq j}}\, x_{|\aal|,|\aal_{\bs i}|}
\bb_{\XX}(\aal_{\bs i})_{+i} &= 
{\textstyle \sum_{\hat\imath}}\, x_{|\aal_{\bs j}|,|\aal_{\bs j \bs \hat\imath}|}
\bb_{\XX}(\aal_{\bs j \bs \hat\imath})_{+\hat\imath+j} \\
&= \left({\textstyle \sum_{\hat\imath}}\, x_{|\aal_{\bs j}|,|\aal_{\bs j \bs \hat\imath}|}\right)
\bb_{\XX}(\aal_{\bs j})_{+j}.
\end{aligned}\]
From this equality \eqref{eq:expansion} simplifies:
\[
\begin{aligned}
\frac{\displaystyle \bb_{\XX}(\aal_{\bs j})_{+j} + {\textstyle \sum_{i \neq j}}\, x_{|\aal|,|\aal_{\bs i}|} \bb_{\XX}(\aal_{\bs i})_{+i}}{\displaystyle 1 + {\textstyle \sum_{i \neq j}}\, x_{|\aal|,|\aal_{\bs i}|}}
&=
\frac{\displaystyle (1 + {\textstyle \sum_{\hat\imath}}\, x_{|\aal_{\bs j}|,|\aal_{\bs j\bs \hat\imath}|})\bb_{\XX}(\aal_{\bs j})_{+j}}{\displaystyle 1 + {\textstyle \sum_{\hat\imath}}\, x_{|\aal_{\bs j}|,|\aal_{\bs j \bs \hat\imath}|}} \\
&=
\bb_{\XX}(\aal_{\bs j})_{+j}.
\end{aligned}
\]
\end{proof}

Properties (II) and (III) together mean that the nodes of \(R^d_{\XX,n}\) on an
edge are always mappings of the 1D node set \(X_n\). This is useful when
simplices appear in hybrid meshes with tensor-product cells, which often use
tensor products of 1D node sets, because common edges between the two cell
types will have the same nodes.

\hypertarget{interp}{%
\subsection{Interpolation properties}\label{interp}}

A problem discretized by the finite element method may require the
approximation of an arbitrary function \(f\) in \(\mathcal{P}_n(\Delta^d)\).
Certain problems have optimal projection operators for this purpose, such as
\(L^2\) projection or \(H^1\) projection, but these operators can only be
approximated with numerical integration rules, and may be implicit or
expensive. When Lagrange polynomials are used as a basis, interpolation at the
nodes is an appealing projection onto \(\mathcal{P}_n(\Delta^d)\), because it
requires the minimum number of function evaluations. Let
\(I_{X}:\mathcal{B}(\Delta^d) \to \mathcal{P}_n(\Delta^d)\) be the
interpolation operator defined by nodes \(X\) acting on bounded, measurable
functions on the \(d\)-simplex. The interpolation error can be bounded by
\[\|I_X f - f\|_{\infty} \leq (1 + \Lambda_n^{\max}(X)) \inf_{p \in
\mathcal{P}_n(\Delta^d)} \| p - f \|_{\infty},\] where \(\Lambda_n^{\max}(X)\) is
the Lebesgue constant, defined by the shape functions \(\Phi_X\) associated with
\(X\),
\[\Lambda_n^{\max}(X) = \max_{\xx \in \Delta^d} {\textstyle
\sum_{\varphi \in \Phi_X}}\ |\varphi(\xx)|.\]

\begin{figure}
\centering
\includegraphics{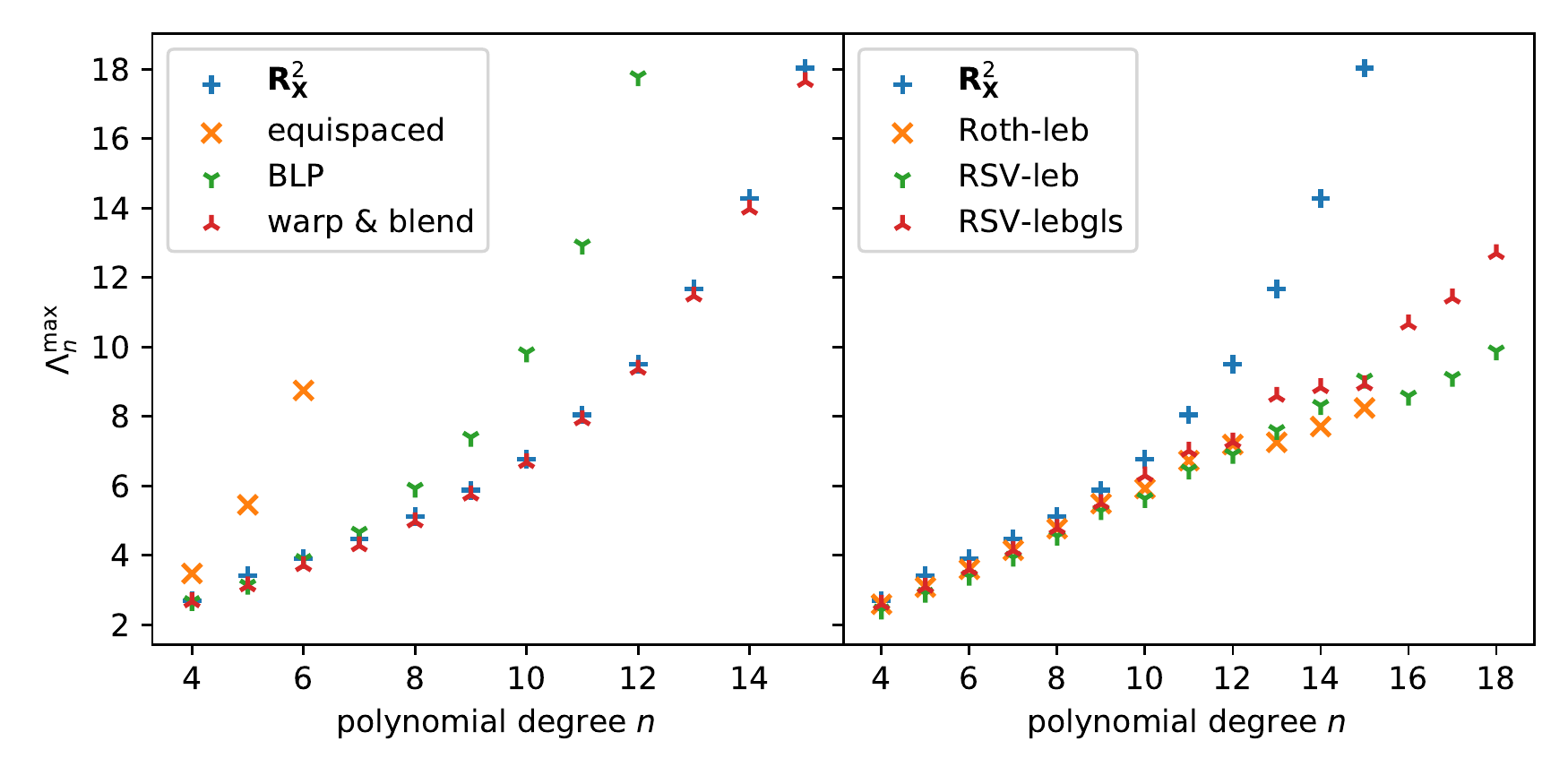}
\caption{\label{fig:leb}Lebesgue constants on the triangle, comparing \(\RR^2_{\XX}\) against node families defined explicitly (left) and implicitly (right).}
\end{figure}

\begin{figure}
\centering
\includegraphics{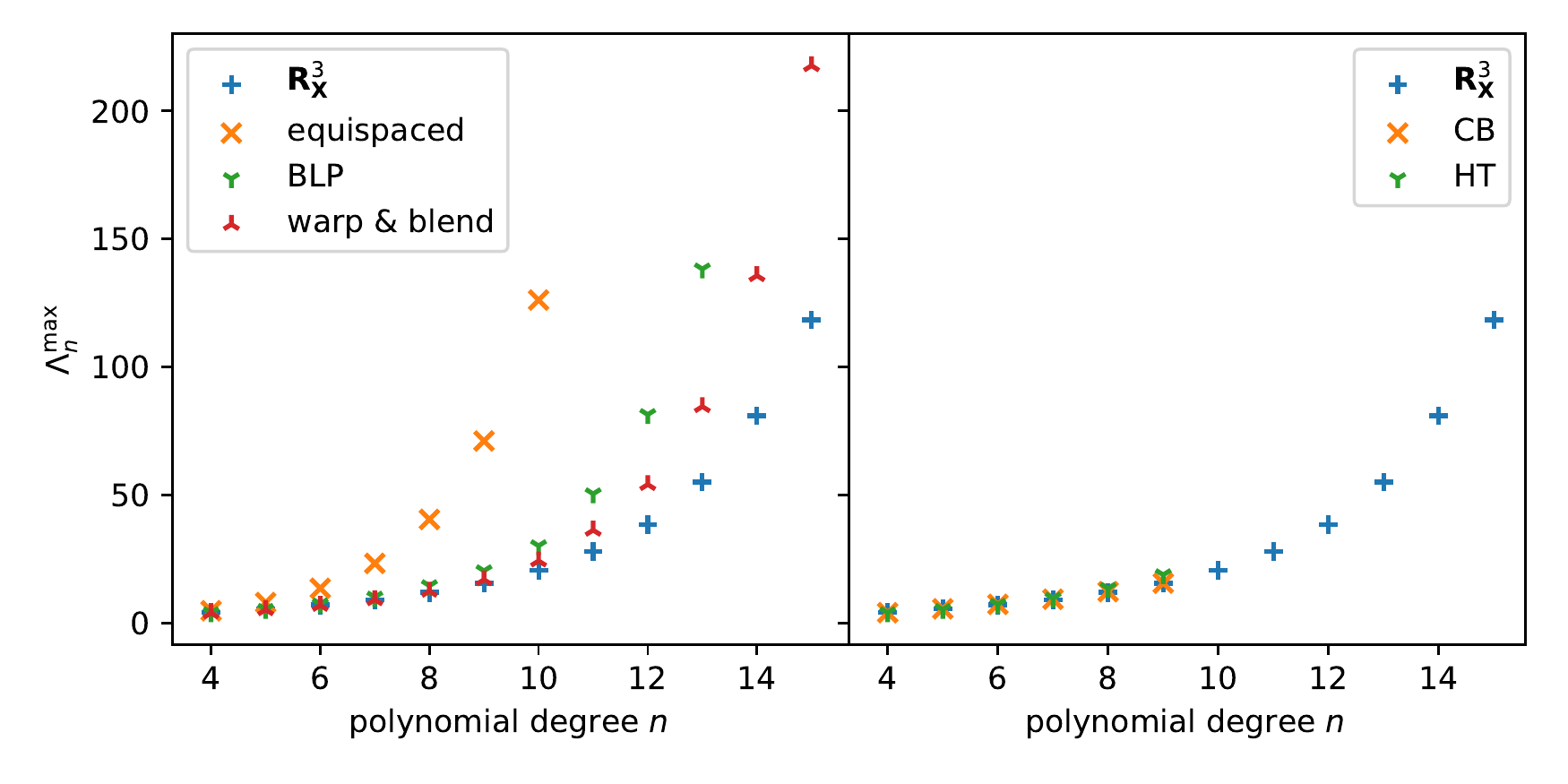}
\caption{\label{fig:lebthree}Lebesgue constants on the tetrahedron, comparing \(\RR^3_{\XX}\) against node families defined explicitly (left) and implicitly (right).}
\end{figure}

Lebesgue constants for \(\RR^d_{\XX}\)
are compared against some other node families on the triangle in Fig.~\ref{fig:leb} and on the tetrahedron in Fig.~\ref{fig:lebthree}.
These include:

\begin{itemize}
\tightlist
\item
  \textbf{equispaced:} Equispaced nodes, defined by
  \(b_{\text{eq},i}(\aal) = \alpha_i / |\aal|\).
\item
  \textbf{BLP:} The nodes of Blyth, Luo, \& Pozrikidis \autocite{BlPo06},\autocite{LuPo06}, which like the
  recursively defined nodes are based on the LGL nodes
  \(\XX_{\text{LGL}}\). If \(\aal > 0\),
  which indicates that the node will be in the interior of the simplex,
  they are defined by
  \[b_{\text{BLP},i}(\aal) =
  \frac{1}{|\aal|}(1 +
  |\aal|x_{|\aal|, \alpha_i} - {\textstyle
  \sum_j}\, x_{|\aal|, \alpha_j}).\]
  Points on the boundary are mapped from the same rule applied to the
  \((d-1)\)-simplex.
\item
  \textbf{warp \& blend:} The nodes of \textcite{Warb06}, which define the node
  location \(\bb_{\text{wb}}(\aal)\) as the image of
  the equispaced node \(\bb_{\text{eq}}(\aal)\) under a
  smooth bijection of the \(d\)-simplex. The bijection sends equispaced nodes
  to LGL nodes on the edges. The smooth map is nearly isoparametric, but a
  blending parameter is introduced that controls the distortion in the interior
  of the element, and optimal values of this blending parameter have been
  computed for \(n\) up to \(15\) in \(d=2\) and \(3\).
\item
  \textbf{Roth-leb:} Nodes for the triangle computed by \textcite{Roth05} by numerical
  minimization of \(\Lambda_n^{\max}\).
\item
  \textbf{RSV-leb:} Nodes for the triangle computed by \textcite{RaSV12} by numerical
  minimization of \(\Lambda_n^{\max}\).
\item
  \textbf{RSV-lebgls:} Nodes for the triangle computed by \textcite{RaSV12} by numerical
  minimization of \(\Lambda_n^{\max}\),
  subject to the constraints that the nodes remain symmetric and that the nodes
  on the edges be LGL nodes.
\item
  \textbf{CB:} Nodes for the tetrahedron computed by \textcite{ChBa96} by numerical
  minimization of the related interpolation metric \[\Lambda_n^2(X) =
  \int_{\Delta^d} \sum_{\varphi\in \Phi_X} \varphi(\xx)^2\ dx.\]
\item
  \textbf{HT:} Nodes for the tetrahedron computed by \textcite{HeTe00} as the equilibrium
  distribution of charged particles.
\end{itemize}

All of these node families except \emph{Roth-leb} and \emph{RSV-leb} are symmetric
for all \(n\), and all except \emph{equispaced}, \emph{Roth-leb}, and \emph{RSV-leb} have
edge traces that are LGL nodes. The \emph{equispaced}, \emph{BLP}, and \emph{warp \& blend}
nodes can be explicitly defined in any dimension and have the recursive
boundary property (III) from Section \ref{symmetry}.\footnote{The \emph{warp \& blend} nodes have this property
  if the same value of the blending parameter is used for each dimension.}

Both Figs.~\ref{fig:leb} and \ref{fig:lebthree} split the comparison of the
\(\RR^d_{\XX}\) node family into comparisons against
families that are explicitly defined and nodes that are implicitly defined as
the solution of an optimization problem.

\textbf{2D:} In 2D, the \(\RR^2_{\XX}\) node family has Lebesgue constants that are
not much worse than those for node families implicitly defined to minimize the
Lebesgue constant for \(n \leq 9\) (Fig.~\ref{fig:leb}, left). For \(n \geq 10\), the Lebesgue constant grows much faster for \(\RR^2_{\XX}\) than for the
best implicitly defined nodes.

Not coincidentally, at \(n=10\) the layout of implicitly defined nodes that
minimize the Lebesgue constant changes significantly. Until then, the
\emph{RSV-lebgls} nodes look ``lattice-like,'' as though they have been smoothly,
symmetrically, and monotonically mapped from the equispaced nodes, the same
as the explicit node families. At \(n=10\), however, this pattern changes
(Fig.~\ref{fig:lebgls}). This suggests that no node family that retains the
lattice-like structure, including \(\RR^d_{\XX}\), can attain a slow growth of
\(\Lambda_n^{\max}\) like the implicitly defined families.

\begin{figure}
\centering
\includegraphics{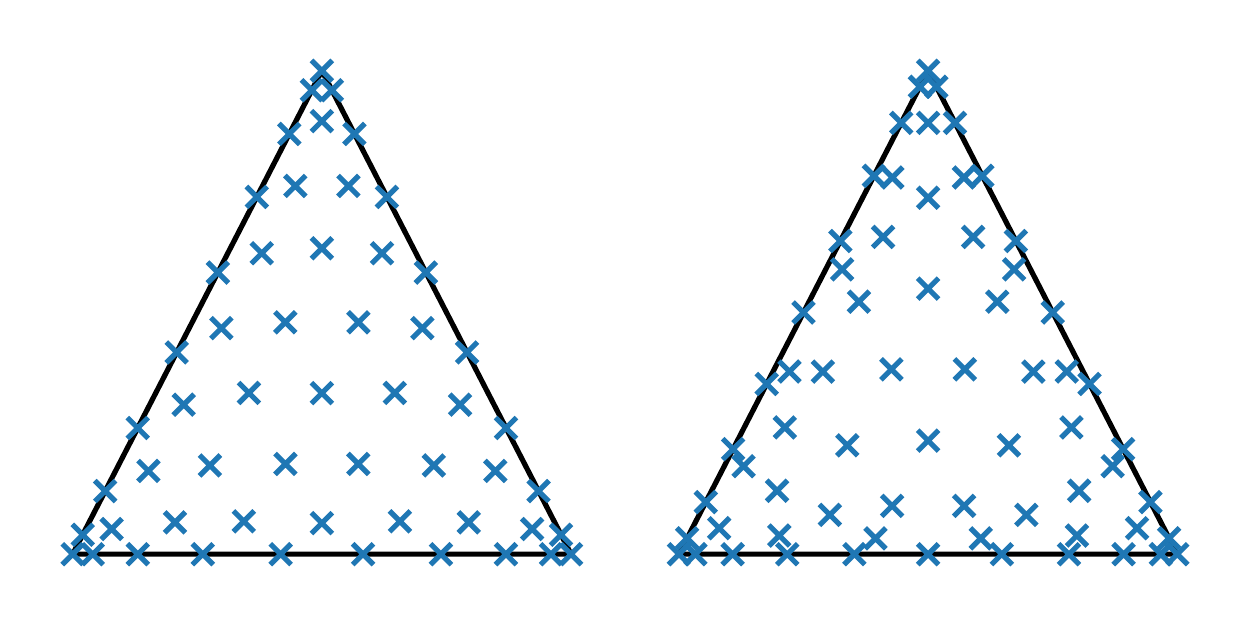}
\caption{\label{fig:lebgls}The LEBGLS nodes of \textcite{RaSV12}, showing the abrupt change in
layout between \(n=9\) (left) and \(n=10\) (right).}
\end{figure}

In comparison to the other explicitly defined nodes (Fig.~\ref{fig:leb},
right), the \(\RR^2_{\XX}\) family is nearly as good as the \emph{warp \& blend}
family, which has the best Lebesgue constants: \(\Lambda_n^{\max}\) is never more
than \(10\%\) different between them for \(n \leq 15\). In fact, despite the
differences in their definitions---\emph{warp \& blend} by continuous bijections,
\(\RR^2_{\XX}\) by recursion---the node families are remarkably similar for \(n \leq 15\): \(\|\bb_{\XX}(\alpha) - \bb_{\text{wb}}(\alpha)\| \leq 0.01\) for every
node in these node sets.

\textbf{3D:} In 3D there are no published examples of \(\Lambda_n^{\max}\)-optimal
node sets that have been numerically computed in the same way as in 2D.
\(\Lambda_n^{\max}(X)\) is a nonconvex function of the node coordinates in \(X\),
and the number of coordinates grows cubically with \(n\), so this is a
challenging optimization problem. Instead, the implicitly
defined node families \emph{CB} and \emph{HT} optimize simpler objectives:
the \(\Lambda_n^2\) interpolation metric and the electrostatic potential,
respectively, and these have only been computed to \(n \leq 9\).
There is little difference in \(\Lambda_n^{\max}\) between
\(\RR^{3}_{\XX}\) and these two families (Fig.~\ref{fig:lebthree}, left),
though it is slightly smaller than both for \(n \geq 6\).

In comparison to the explicitly defined node families \emph{BLP} and \emph{warp \& blend}
(Fig.~\ref{fig:lebthree}, right), there is little difference for \(n \leq 6\)
(all are within \(7\%\) of each other), but \(\RR^3_{\XX}\) is increasingly
superior for \(n \geq 7\). For \(n=15\), the largest for which the \emph{warp \& blend}
nodes' blending parameter has been optimized, the Lebesgue constant of
\(\RR^3_{\XX}\) is \(40\%\) smaller.

\hypertarget{asymptotic-interpolation-properties}{%
\subsection{Asymptotic interpolation properties}\label{asymptotic-interpolation-properties}}

A node family is good for approximation by interpolation if the
interpolants are known to converge for a large class of functions.
In particular, if \(f\) is analytic in the neighborhood of \(\Delta^d\),
then there is a sequence of polynomials \(p_n \rightrightarrows f\) (converging uniformly
on \(\Delta^d\)), so it is possible that given the right node family
\(\XX = \{X_n\}\) that \(I_{X_n} f \rightrightarrows f\) for all analytic \(f\) as well.

The weakest known sufficient condition that guarantees this for \(d > 1\)
is sub-exponential growth of the Lebesgue constant:
if \(\Lambda_n^{\max}(X_n)^{1/n} \to 1\), then \(I_{X_n} f \rightrightarrows f\) for \(f\) analytic in a neighborhood of \(\Delta^d\) \autocite{BBCL92}. The values of
\(\Lambda_n^{\max}(R^d_{\XX,n})\) that appeared in Figs.~\ref{fig:leb} \&
\ref{fig:lebthree} are tabulated in Table \ref{tab:lebtable}. In all tabulated values,
\((\Lambda_n^{\max})^{1/n}\) continues to increase instead of converging to 1, so they show no evidence
of sub-exponential growth.

\begin{longtable}[]{@{}rrrrr@{}}
\caption{\label{tab:lebtable} Lebesgue constants computed for \(\RR^d_{\XX}\)}\tabularnewline
\toprule
\begin{minipage}[b]{0.04\columnwidth}\raggedleft
\(n\)\strut
\end{minipage} & \begin{minipage}[b]{0.19\columnwidth}\raggedleft
\(\Lambda_n^{\max}(R^2_{\XX,n})\)\strut
\end{minipage} & \begin{minipage}[b]{0.22\columnwidth}\raggedleft
\(\Lambda_n^{\max}(R^2_{\XX,n})^{1/n}\)\strut
\end{minipage} & \begin{minipage}[b]{0.19\columnwidth}\raggedleft
\(\Lambda_n^{\max}(R^3_{\XX,n})\)\strut
\end{minipage} & \begin{minipage}[b]{0.22\columnwidth}\raggedleft
\(\Lambda_n^{\max}(R^3_{\XX,n})^{1/n}\)\strut
\end{minipage}\tabularnewline
\midrule
\endfirsthead
\toprule
\begin{minipage}[b]{0.04\columnwidth}\raggedleft
\(n\)\strut
\end{minipage} & \begin{minipage}[b]{0.19\columnwidth}\raggedleft
\(\Lambda_n^{\max}(R^2_{\XX,n})\)\strut
\end{minipage} & \begin{minipage}[b]{0.22\columnwidth}\raggedleft
\(\Lambda_n^{\max}(R^2_{\XX,n})^{1/n}\)\strut
\end{minipage} & \begin{minipage}[b]{0.19\columnwidth}\raggedleft
\(\Lambda_n^{\max}(R^3_{\XX,n})\)\strut
\end{minipage} & \begin{minipage}[b]{0.22\columnwidth}\raggedleft
\(\Lambda_n^{\max}(R^3_{\XX,n})^{1/n}\)\strut
\end{minipage}\tabularnewline
\midrule
\endhead
\begin{minipage}[t]{0.04\columnwidth}\raggedleft
4\strut
\end{minipage} & \begin{minipage}[t]{0.19\columnwidth}\raggedleft
2.67857\strut
\end{minipage} & \begin{minipage}[t]{0.22\columnwidth}\raggedleft
1.27931\strut
\end{minipage} & \begin{minipage}[t]{0.19\columnwidth}\raggedleft
4.09308\strut
\end{minipage} & \begin{minipage}[t]{0.22\columnwidth}\raggedleft
1.42237\strut
\end{minipage}\tabularnewline
\begin{minipage}[t]{0.04\columnwidth}\raggedleft
5\strut
\end{minipage} & \begin{minipage}[t]{0.19\columnwidth}\raggedleft
3.40745\strut
\end{minipage} & \begin{minipage}[t]{0.22\columnwidth}\raggedleft
1.27787\strut
\end{minipage} & \begin{minipage}[t]{0.19\columnwidth}\raggedleft
5.54727\strut
\end{minipage} & \begin{minipage}[t]{0.22\columnwidth}\raggedleft
1.40869\strut
\end{minipage}\tabularnewline
\begin{minipage}[t]{0.04\columnwidth}\raggedleft
6\strut
\end{minipage} & \begin{minipage}[t]{0.19\columnwidth}\raggedleft
3.90448\strut
\end{minipage} & \begin{minipage}[t]{0.22\columnwidth}\raggedleft
1.25486\strut
\end{minipage} & \begin{minipage}[t]{0.19\columnwidth}\raggedleft
7.16891\strut
\end{minipage} & \begin{minipage}[t]{0.22\columnwidth}\raggedleft
1.38859\strut
\end{minipage}\tabularnewline
\begin{minipage}[t]{0.04\columnwidth}\raggedleft
7\strut
\end{minipage} & \begin{minipage}[t]{0.19\columnwidth}\raggedleft
4.47897\strut
\end{minipage} & \begin{minipage}[t]{0.22\columnwidth}\raggedleft
1.23887\strut
\end{minipage} & \begin{minipage}[t]{0.19\columnwidth}\raggedleft
9.20205\strut
\end{minipage} & \begin{minipage}[t]{0.22\columnwidth}\raggedleft
1.37309\strut
\end{minipage}\tabularnewline
\begin{minipage}[t]{0.04\columnwidth}\raggedleft
8\strut
\end{minipage} & \begin{minipage}[t]{0.19\columnwidth}\raggedleft
5.10406\strut
\end{minipage} & \begin{minipage}[t]{0.22\columnwidth}\raggedleft
1.226\strut
\end{minipage} & \begin{minipage}[t]{0.19\columnwidth}\raggedleft
12.0671\strut
\end{minipage} & \begin{minipage}[t]{0.22\columnwidth}\raggedleft
1.36521\strut
\end{minipage}\tabularnewline
\begin{minipage}[t]{0.04\columnwidth}\raggedleft
9\strut
\end{minipage} & \begin{minipage}[t]{0.19\columnwidth}\raggedleft
5.87268\strut
\end{minipage} & \begin{minipage}[t]{0.22\columnwidth}\raggedleft
1.21738\strut
\end{minipage} & \begin{minipage}[t]{0.19\columnwidth}\raggedleft
15.5927\strut
\end{minipage} & \begin{minipage}[t]{0.22\columnwidth}\raggedleft
1.3569\strut
\end{minipage}\tabularnewline
\begin{minipage}[t]{0.04\columnwidth}\raggedleft
10\strut
\end{minipage} & \begin{minipage}[t]{0.19\columnwidth}\raggedleft
6.77248\strut
\end{minipage} & \begin{minipage}[t]{0.22\columnwidth}\raggedleft
1.21081\strut
\end{minipage} & \begin{minipage}[t]{0.19\columnwidth}\raggedleft
20.6234\strut
\end{minipage} & \begin{minipage}[t]{0.22\columnwidth}\raggedleft
1.35343\strut
\end{minipage}\tabularnewline
\begin{minipage}[t]{0.04\columnwidth}\raggedleft
11\strut
\end{minipage} & \begin{minipage}[t]{0.19\columnwidth}\raggedleft
8.04267\strut
\end{minipage} & \begin{minipage}[t]{0.22\columnwidth}\raggedleft
1.20867\strut
\end{minipage} & \begin{minipage}[t]{0.19\columnwidth}\raggedleft
28.034\strut
\end{minipage} & \begin{minipage}[t]{0.22\columnwidth}\raggedleft
1.35397\strut
\end{minipage}\tabularnewline
\begin{minipage}[t]{0.04\columnwidth}\raggedleft
12\strut
\end{minipage} & \begin{minipage}[t]{0.19\columnwidth}\raggedleft
9.49527\strut
\end{minipage} & \begin{minipage}[t]{0.22\columnwidth}\raggedleft
1.20631\strut
\end{minipage} & \begin{minipage}[t]{0.19\columnwidth}\raggedleft
38.6495\strut
\end{minipage} & \begin{minipage}[t]{0.22\columnwidth}\raggedleft
1.35601\strut
\end{minipage}\tabularnewline
\begin{minipage}[t]{0.04\columnwidth}\raggedleft
13\strut
\end{minipage} & \begin{minipage}[t]{0.19\columnwidth}\raggedleft
11.6647\strut
\end{minipage} & \begin{minipage}[t]{0.22\columnwidth}\raggedleft
1.208\strut
\end{minipage} & \begin{minipage}[t]{0.19\columnwidth}\raggedleft
55.1425\strut
\end{minipage} & \begin{minipage}[t]{0.22\columnwidth}\raggedleft
1.36132\strut
\end{minipage}\tabularnewline
\begin{minipage}[t]{0.04\columnwidth}\raggedleft
14\strut
\end{minipage} & \begin{minipage}[t]{0.19\columnwidth}\raggedleft
14.2678\strut
\end{minipage} & \begin{minipage}[t]{0.22\columnwidth}\raggedleft
1.20908\strut
\end{minipage} & \begin{minipage}[t]{0.19\columnwidth}\raggedleft
81.0374\strut
\end{minipage} & \begin{minipage}[t]{0.22\columnwidth}\raggedleft
1.36878\strut
\end{minipage}\tabularnewline
\begin{minipage}[t]{0.04\columnwidth}\raggedleft
15\strut
\end{minipage} & \begin{minipage}[t]{0.19\columnwidth}\raggedleft
18.0306\strut
\end{minipage} & \begin{minipage}[t]{0.22\columnwidth}\raggedleft
1.21265\strut
\end{minipage} & \begin{minipage}[t]{0.19\columnwidth}\raggedleft
118.42\strut
\end{minipage} & \begin{minipage}[t]{0.22\columnwidth}\raggedleft
1.37476\strut
\end{minipage}\tabularnewline
\bottomrule
\end{longtable}

In fact, \textcite{BBCL92} considered it an open question whether
explicitly computed node families with uniformly convergent interpolants
exist for \emph{any} nontrivial set in \(d > 1\). In the intervening time,
analogues of the Chebyshev polynomials have been found for domains
related to root systems \autocite{RyMu11}, but these domains are not simplices.
\textcite{BBCL12} considered the question still open for simplices twenty years later,
and it appears to still be open now.

\hypertarget{finite-element-matrix-conditioning}{%
\subsection{Finite element matrix conditioning}\label{finite-element-matrix-conditioning}}

Matrices that show up repeatedly in applications of the finite element method
include the mass matrix \(M_{ij} = \int_{\Delta^d} \varphi_{i} \varphi_{j}\ dx\)
and the stiffness matrix \(K_{ij} = \int_{\Delta^d} \nabla \varphi_{n,i} \cdot \nabla \varphi_{n,j}\ dx\). Let \(G_{ijk} = \partial_j \varphi_{k} (\xx_i)\)
(considered as a matrix in \(\mathbb{R}^{d|\Phi| \times |\Phi|}\)), and \(L_{ij} = \nabla \cdot \nabla \varphi_{j}(\xx_i)\). These are the nodal gradient and
Laplacian matrices, that appear in strong-form nodal discontinuous Galerkin
methods. The condition numbers of these matrices (using the definition
\(\kappa_2(A) = \|A\|_2\|A^{\dagger}\|_2\)) for \(\RR^d_{\XX}\) are compared
against the condition numbers for the equispaced, \emph{BLP}, \emph{warp \& blend}, and
\emph{RSV-lebgls} nodes in Fig.~\ref{fig:condition}. The condition number of \(M\)
is affine invariant and in a quasiuniform mesh bounds the condition number of a
fully assembled mass matrix \autocite{Wath87}, while the condition numbers of \(K\), \(G\),
and \(L\) depend on the choice of reference simplex: in this work, they are
computed with respect to the biunit simplex \(\Delta^d_{\text{bi}} = \{\xx \in \mathbb{R}^d: \xx \geq -1,\ \sum_i x_i \leq 2 - d\}\). The rankings of the node
families by these metrics are essentially the same as by the Lebesgue constant
in Section \ref{interp}.

\begin{figure}
\centering
\includegraphics{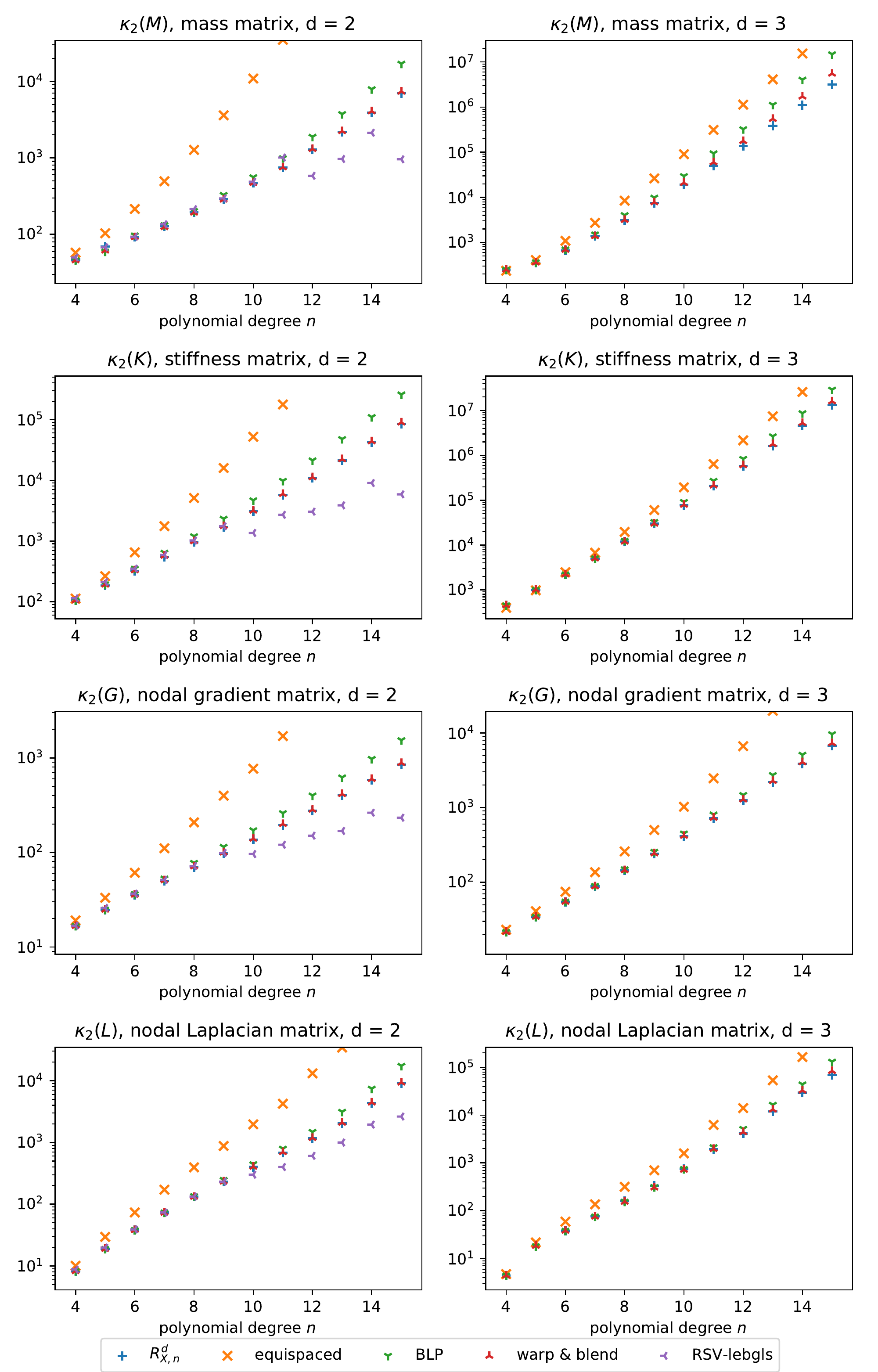}
\caption{\label{fig:condition}Condition numbers of finite element matrices.}
\end{figure}

\begin{longtable}[]{@{}rrrrrrrrr@{}}
\caption{\label{tab:condtable} Finite element matrix condition numbers \(\RR^2_{\XX}\)}\tabularnewline
\toprule
\begin{minipage}[b]{0.03\columnwidth}\raggedleft
\(n\)\strut
\end{minipage} & \begin{minipage}[b]{0.08\columnwidth}\raggedleft
\(\kappa_2(M)\)\strut
\end{minipage} & \begin{minipage}[b]{0.10\columnwidth}\raggedleft
\(\kappa_2(M)^{1/n}\)\strut
\end{minipage} & \begin{minipage}[b]{0.08\columnwidth}\raggedleft
\(\kappa_2(K)\)\strut
\end{minipage} & \begin{minipage}[b]{0.10\columnwidth}\raggedleft
\(\kappa_2(K)^{1/n}\)\strut
\end{minipage} & \begin{minipage}[b]{0.08\columnwidth}\raggedleft
\(\kappa_2(G)\)\strut
\end{minipage} & \begin{minipage}[b]{0.10\columnwidth}\raggedleft
\(\kappa_2(G)^{1/n}\)\strut
\end{minipage} & \begin{minipage}[b]{0.08\columnwidth}\raggedleft
\(\kappa_2(L)\)\strut
\end{minipage} & \begin{minipage}[b]{0.10\columnwidth}\raggedleft
\(\kappa_2(L)^{1/n}\)\strut
\end{minipage}\tabularnewline
\midrule
\endfirsthead
\toprule
\begin{minipage}[b]{0.03\columnwidth}\raggedleft
\(n\)\strut
\end{minipage} & \begin{minipage}[b]{0.08\columnwidth}\raggedleft
\(\kappa_2(M)\)\strut
\end{minipage} & \begin{minipage}[b]{0.10\columnwidth}\raggedleft
\(\kappa_2(M)^{1/n}\)\strut
\end{minipage} & \begin{minipage}[b]{0.08\columnwidth}\raggedleft
\(\kappa_2(K)\)\strut
\end{minipage} & \begin{minipage}[b]{0.10\columnwidth}\raggedleft
\(\kappa_2(K)^{1/n}\)\strut
\end{minipage} & \begin{minipage}[b]{0.08\columnwidth}\raggedleft
\(\kappa_2(G)\)\strut
\end{minipage} & \begin{minipage}[b]{0.10\columnwidth}\raggedleft
\(\kappa_2(G)^{1/n}\)\strut
\end{minipage} & \begin{minipage}[b]{0.08\columnwidth}\raggedleft
\(\kappa_2(L)\)\strut
\end{minipage} & \begin{minipage}[b]{0.10\columnwidth}\raggedleft
\(\kappa_2(L)^{1/n}\)\strut
\end{minipage}\tabularnewline
\midrule
\endhead
\begin{minipage}[t]{0.03\columnwidth}\raggedleft
4\strut
\end{minipage} & \begin{minipage}[t]{0.08\columnwidth}\raggedleft
4.7e+01\strut
\end{minipage} & \begin{minipage}[t]{0.10\columnwidth}\raggedleft
2.618\strut
\end{minipage} & \begin{minipage}[t]{0.08\columnwidth}\raggedleft
1.0e+02\strut
\end{minipage} & \begin{minipage}[t]{0.10\columnwidth}\raggedleft
3.196\strut
\end{minipage} & \begin{minipage}[t]{0.08\columnwidth}\raggedleft
1.7e+01\strut
\end{minipage} & \begin{minipage}[t]{0.10\columnwidth}\raggedleft
2.022\strut
\end{minipage} & \begin{minipage}[t]{0.08\columnwidth}\raggedleft
8.2e+00\strut
\end{minipage} & \begin{minipage}[t]{0.10\columnwidth}\raggedleft
1.691\strut
\end{minipage}\tabularnewline
\begin{minipage}[t]{0.03\columnwidth}\raggedleft
8\strut
\end{minipage} & \begin{minipage}[t]{0.08\columnwidth}\raggedleft
2.0e+02\strut
\end{minipage} & \begin{minipage}[t]{0.10\columnwidth}\raggedleft
1.933\strut
\end{minipage} & \begin{minipage}[t]{0.08\columnwidth}\raggedleft
9.5e+02\strut
\end{minipage} & \begin{minipage}[t]{0.10\columnwidth}\raggedleft
2.358\strut
\end{minipage} & \begin{minipage}[t]{0.08\columnwidth}\raggedleft
7.0e+01\strut
\end{minipage} & \begin{minipage}[t]{0.10\columnwidth}\raggedleft
1.700\strut
\end{minipage} & \begin{minipage}[t]{0.08\columnwidth}\raggedleft
1.3e+02\strut
\end{minipage} & \begin{minipage}[t]{0.10\columnwidth}\raggedleft
1.840\strut
\end{minipage}\tabularnewline
\begin{minipage}[t]{0.03\columnwidth}\raggedleft
16\strut
\end{minipage} & \begin{minipage}[t]{0.08\columnwidth}\raggedleft
1.3e+04\strut
\end{minipage} & \begin{minipage}[t]{0.10\columnwidth}\raggedleft
1.808\strut
\end{minipage} & \begin{minipage}[t]{0.08\columnwidth}\raggedleft
1.7e+05\strut
\end{minipage} & \begin{minipage}[t]{0.10\columnwidth}\raggedleft
2.124\strut
\end{minipage} & \begin{minipage}[t]{0.08\columnwidth}\raggedleft
1.2e+03\strut
\end{minipage} & \begin{minipage}[t]{0.10\columnwidth}\raggedleft
1.561\strut
\end{minipage} & \begin{minipage}[t]{0.08\columnwidth}\raggedleft
1.9e+04\strut
\end{minipage} & \begin{minipage}[t]{0.10\columnwidth}\raggedleft
1.848\strut
\end{minipage}\tabularnewline
\begin{minipage}[t]{0.03\columnwidth}\raggedleft
24\strut
\end{minipage} & \begin{minipage}[t]{0.08\columnwidth}\raggedleft
2.8e+06\strut
\end{minipage} & \begin{minipage}[t]{0.10\columnwidth}\raggedleft
1.856\strut
\end{minipage} & \begin{minipage}[t]{0.08\columnwidth}\raggedleft
6.3e+07\strut
\end{minipage} & \begin{minipage}[t]{0.10\columnwidth}\raggedleft
2.113\strut
\end{minipage} & \begin{minipage}[t]{0.08\columnwidth}\raggedleft
2.8e+04\strut
\end{minipage} & \begin{minipage}[t]{0.10\columnwidth}\raggedleft
1.532\strut
\end{minipage} & \begin{minipage}[t]{0.08\columnwidth}\raggedleft
7.4e+06\strut
\end{minipage} & \begin{minipage}[t]{0.10\columnwidth}\raggedleft
1.933\strut
\end{minipage}\tabularnewline
\begin{minipage}[t]{0.03\columnwidth}\raggedleft
32\strut
\end{minipage} & \begin{minipage}[t]{0.08\columnwidth}\raggedleft
8.0e+08\strut
\end{minipage} & \begin{minipage}[t]{0.10\columnwidth}\raggedleft
1.898\strut
\end{minipage} & \begin{minipage}[t]{0.08\columnwidth}\raggedleft
2.5e+10\strut
\end{minipage} & \begin{minipage}[t]{0.10\columnwidth}\raggedleft
2.114\strut
\end{minipage} & \begin{minipage}[t]{0.08\columnwidth}\raggedleft
6.2e+05\strut
\end{minipage} & \begin{minipage}[t]{0.10\columnwidth}\raggedleft
1.517\strut
\end{minipage} & \begin{minipage}[t]{0.08\columnwidth}\raggedleft
3.2e+09\strut
\end{minipage} & \begin{minipage}[t]{0.10\columnwidth}\raggedleft
1.982\strut
\end{minipage}\tabularnewline
\bottomrule
\end{longtable}

\begin{longtable}[]{@{}rrrrrrrrr@{}}
\caption{\label{tab:condtable3} Finite element matrix condition numbers \(\RR^3_{\XX}\)}\tabularnewline
\toprule
\begin{minipage}[b]{0.03\columnwidth}\raggedleft
\(n\)\strut
\end{minipage} & \begin{minipage}[b]{0.08\columnwidth}\raggedleft
\(\kappa_2(M)\)\strut
\end{minipage} & \begin{minipage}[b]{0.10\columnwidth}\raggedleft
\(\kappa_2(M)^{1/n}\)\strut
\end{minipage} & \begin{minipage}[b]{0.08\columnwidth}\raggedleft
\(\kappa_2(K)\)\strut
\end{minipage} & \begin{minipage}[b]{0.10\columnwidth}\raggedleft
\(\kappa_2(K)^{1/n}\)\strut
\end{minipage} & \begin{minipage}[b]{0.08\columnwidth}\raggedleft
\(\kappa_2(G)\)\strut
\end{minipage} & \begin{minipage}[b]{0.10\columnwidth}\raggedleft
\(\kappa_2(G)^{1/n}\)\strut
\end{minipage} & \begin{minipage}[b]{0.08\columnwidth}\raggedleft
\(\kappa_2(L)\)\strut
\end{minipage} & \begin{minipage}[b]{0.10\columnwidth}\raggedleft
\(\kappa_2(L)^{1/n}\)\strut
\end{minipage}\tabularnewline
\midrule
\endfirsthead
\toprule
\begin{minipage}[b]{0.03\columnwidth}\raggedleft
\(n\)\strut
\end{minipage} & \begin{minipage}[b]{0.08\columnwidth}\raggedleft
\(\kappa_2(M)\)\strut
\end{minipage} & \begin{minipage}[b]{0.10\columnwidth}\raggedleft
\(\kappa_2(M)^{1/n}\)\strut
\end{minipage} & \begin{minipage}[b]{0.08\columnwidth}\raggedleft
\(\kappa_2(K)\)\strut
\end{minipage} & \begin{minipage}[b]{0.10\columnwidth}\raggedleft
\(\kappa_2(K)^{1/n}\)\strut
\end{minipage} & \begin{minipage}[b]{0.08\columnwidth}\raggedleft
\(\kappa_2(G)\)\strut
\end{minipage} & \begin{minipage}[b]{0.10\columnwidth}\raggedleft
\(\kappa_2(G)^{1/n}\)\strut
\end{minipage} & \begin{minipage}[b]{0.08\columnwidth}\raggedleft
\(\kappa_2(L)\)\strut
\end{minipage} & \begin{minipage}[b]{0.10\columnwidth}\raggedleft
\(\kappa_2(L)^{1/n}\)\strut
\end{minipage}\tabularnewline
\midrule
\endhead
\begin{minipage}[t]{0.03\columnwidth}\raggedleft
4\strut
\end{minipage} & \begin{minipage}[t]{0.08\columnwidth}\raggedleft
2.5e+02\strut
\end{minipage} & \begin{minipage}[t]{0.10\columnwidth}\raggedleft
3.977\strut
\end{minipage} & \begin{minipage}[t]{0.08\columnwidth}\raggedleft
4.5e+02\strut
\end{minipage} & \begin{minipage}[t]{0.10\columnwidth}\raggedleft
4.615\strut
\end{minipage} & \begin{minipage}[t]{0.08\columnwidth}\raggedleft
2.2e+01\strut
\end{minipage} & \begin{minipage}[t]{0.10\columnwidth}\raggedleft
2.158\strut
\end{minipage} & \begin{minipage}[t]{0.08\columnwidth}\raggedleft
4.4e+00\strut
\end{minipage} & \begin{minipage}[t]{0.10\columnwidth}\raggedleft
1.449\strut
\end{minipage}\tabularnewline
\begin{minipage}[t]{0.03\columnwidth}\raggedleft
8\strut
\end{minipage} & \begin{minipage}[t]{0.08\columnwidth}\raggedleft
3.1e+03\strut
\end{minipage} & \begin{minipage}[t]{0.10\columnwidth}\raggedleft
2.734\strut
\end{minipage} & \begin{minipage}[t]{0.08\columnwidth}\raggedleft
1.2e+04\strut
\end{minipage} & \begin{minipage}[t]{0.10\columnwidth}\raggedleft
3.231\strut
\end{minipage} & \begin{minipage}[t]{0.08\columnwidth}\raggedleft
1.4e+02\strut
\end{minipage} & \begin{minipage}[t]{0.10\columnwidth}\raggedleft
1.862\strut
\end{minipage} & \begin{minipage}[t]{0.08\columnwidth}\raggedleft
1.6e+02\strut
\end{minipage} & \begin{minipage}[t]{0.10\columnwidth}\raggedleft
1.889\strut
\end{minipage}\tabularnewline
\begin{minipage}[t]{0.03\columnwidth}\raggedleft
12\strut
\end{minipage} & \begin{minipage}[t]{0.08\columnwidth}\raggedleft
1.4e+05\strut
\end{minipage} & \begin{minipage}[t]{0.10\columnwidth}\raggedleft
2.682\strut
\end{minipage} & \begin{minipage}[t]{0.08\columnwidth}\raggedleft
5.8e+05\strut
\end{minipage} & \begin{minipage}[t]{0.10\columnwidth}\raggedleft
3.022\strut
\end{minipage} & \begin{minipage}[t]{0.08\columnwidth}\raggedleft
1.3e+03\strut
\end{minipage} & \begin{minipage}[t]{0.10\columnwidth}\raggedleft
1.812\strut
\end{minipage} & \begin{minipage}[t]{0.08\columnwidth}\raggedleft
4.1e+03\strut
\end{minipage} & \begin{minipage}[t]{0.10\columnwidth}\raggedleft
2.001\strut
\end{minipage}\tabularnewline
\begin{minipage}[t]{0.03\columnwidth}\raggedleft
16\strut
\end{minipage} & \begin{minipage}[t]{0.08\columnwidth}\raggedleft
9.3e+06\strut
\end{minipage} & \begin{minipage}[t]{0.10\columnwidth}\raggedleft
2.726\strut
\end{minipage} & \begin{minipage}[t]{0.08\columnwidth}\raggedleft
3.8e+07\strut
\end{minipage} & \begin{minipage}[t]{0.10\columnwidth}\raggedleft
2.979\strut
\end{minipage} & \begin{minipage}[t]{0.08\columnwidth}\raggedleft
1.2e+04\strut
\end{minipage} & \begin{minipage}[t]{0.10\columnwidth}\raggedleft
1.798\strut
\end{minipage} & \begin{minipage}[t]{0.08\columnwidth}\raggedleft
1.8e+05\strut
\end{minipage} & \begin{minipage}[t]{0.10\columnwidth}\raggedleft
2.132\strut
\end{minipage}\tabularnewline
\bottomrule
\end{longtable}

In Tables \ref{tab:condtable} and \ref{tab:condtable3} the growth rates of
these condition numbers can be assessed. The values of \(\kappa_2(M)^{1/n}\),
\(\kappa_2(K)^{1/n}\) and \(\kappa_2(L)^{1/n}\) are not monotonically decreasing in
both dimensions for values of \(n\) that have been calculated, which suggests
super-exponential growth. \(\kappa_2(G)^{1/n}\) appears to be monotonically
decreasing towards some limit \(\gamma_d > 1\) for \(d=2\), and \(d=3\), which
suggests exponential growth, but there is no proof of this fact.

\hypertarget{finite-element-matrix-efficiency}{%
\subsection{Finite element matrix efficiency}\label{finite-element-matrix-efficiency}}

To evaluate the basis functions of \(R^d_{\XX,n}\) at a set of nodes \(Q\),
one can compute the Vandermonde matrices \(V_{R^d_{\XX,n}}\) and \(V_Q\)
with respect to a stable basis for \(\mathcal{R}_n(\Delta^d)\), such
as the Proriol-Koornwinder-Dubiner basis \autocite{Pror57},\autocite{Koor75},\autocite{Dubi91},
and assemble \(V_Q V_{R^d_{\XX,n}}^{-1}\). This approach goes back at least to \autocite{WaPH00}, and was improved with a singularity-free evaluation of the basis by \textcite{Kirb10b}. Assuming \(|Q|\in \Theta(n^d)\),
the cost of constructing this matrix is \(\Theta(n^{3d})\). There is no structure in \(R^d_{\XX,n}\)
that would allow for fast application to a vector of nodal coefficients,
so the cost of a matrix vector product is \(\Theta(n^{2d})\). The same costs
hold for each directional derivative of the basis functions.

There appears to be no Lagrange polynomial basis for \(\mathcal{R}_n(\Delta^d)\)
that improves on this for \(d > 1\), so all of the node families discussed above
are equal with respect to this metric. It must be noted, however, that outside
of Lagrange bases are bases that have fast algorithms, either through
hierarchical construction, like Bernstein-Bézier polynomials, or through
generalized tensor-product constructions related to the Duffy transformation,
like the basis of \textcite{ShKa95}. The Bernstein-Bézier basis has been the subject of
more recent work, and has fast algorithms that allow for optimal construction
in \(\Theta(n^{2d})\) and application in \(\Theta(n^d)\) for these matrices, for
constant coefficients matrices without quadrature \autocite{Kirb10}, for evaluation at
the Stroud quadrature points \autocite{AiAD11}, and for the inverse of the mass matrix
\autocite{Kirb16}. A drawback of the Bernstein-Bézier basis is mass-matrix condition
numbers \(\kappa_2(n,d) = \binom{2n + d}{n}\) that are worse even than equispaced
nodes, though recent work by \textcite{AlKi20} shows that a condition number that uses a
matrix norm based on the \(L_2\) norm of the reconstructed polynomials grows like
\(\sqrt{\kappa_2(n,d)}\).

\hypertarget{ease-of-computation-and-implementation}{%
\subsection{Ease of computation and implementation}\label{ease-of-computation-and-implementation}}

Implicitly defined node families, including \emph{Roth-leb}, \emph{RSV-leb}, \emph{RSV-lebgls},
\emph{CB}, and \emph{HT} from Section~\ref{interp} and others not discussed, require
the solution of an optimization problem over the choice of node coordinates, a
problem size that, even with symmetries enforced, is \(\Theta(n^d)\) in \(n\).
Objective functions like \(\Lambda_n^{\max}(X)\) are quite nonconvex, so care must
be taken to avoid local minima. It is fair to characterize these node sets as
relatively expensive to compute from scratch.

The ease of implementing node sets from a node family is distinct from the
computational complexity of computing the node sets from scratch. Most of the
implicitly defined node sets discussed in this work have published node sets
for moderate values of \(n\) for \(d=2\) \autocite{ChBa95}\autocite{Hest98}\autocite{RaSV12}, and a few for
\(d=3\) \autocite{ChBa96}\autocite{HeTe00}.

Of the explicitly defined node families discussed in this work, the
\emph{equispaced} and \emph{BLP} nodes are the cheapest to compute: the former requires
\(\Theta(d)\) operations and \(\Theta(1)\) workspace, the latter requires
\(\Theta(d^2)\) operations and \(\Theta(1)\) workspace per node. The \emph{warp \&
blend} nodes additionally require \(d+1\) evaluations of 1D Jacobi polynomials up
to degree \(n\) at each node (one per facet of the simplex) and one \(\Theta(n^3)\)
inversion of a 1D Vandermonde matrix of size \(n+1\) per node set.

The computational complexity of computing one node \(\bb_{\XX}(\aal)\) in
isolation by the rule \eqref{eq:recursive} satisfies the recursion \(T(d) = (d+1)T(d-1) + \Theta(d^2)\), which implies \(T(d)\in \Theta((d+1)!)\). The workspace
satisfies the recursion \(S(d) = S(d-1) + \Theta(d)\), so \(S(d)\in \Theta(d^2)\).
Neither of these are a concern for \(d=2\) or \(3\).

If the nodes must be computed for higher dimensions, the cost of computing a
full node set can be reduced by caching the lower-dimensional nodes. Then the
cost of computing the node sets \(\{R^d_{\XX,i}\}_{i=0}^n\) with caching
satisfies the recursion \(T(d,n) = T(d-1,n) + \Theta(\binom{n+d+1}{d+1}d^2)\),
which implies \(T(d,n) \in O(\binom{n+d+2}{d+1}d^2)\), leading to an amortized
cost per node that is \(O((n+d)d^2/n)\). The workspace with caching satisfies the
recursion \(S(d,n) = S(d-1,n) + \Theta(\binom{n+d+1}{d+1}d)\), so \(S(d,n) \in O(\binom{n+d+2}{d+1}d)\), which is an amortized space per node that is \(O((n+d)d/n)\).

In terms of implementation, once the 1D node family \(\XX\) is available,
the code to compute \(\bb_{\XX}(\aal)\) is very short. Here is an example
implementation in python:

\begin{Shaded}
\begin{Highlighting}[]
\ImportTok{import}\NormalTok{ numpy }\ImportTok{as}\NormalTok{ np}

\KeywordTok{def}\NormalTok{ recursive(alpha, family):}
    \CommentTok{'''The barycentric d-simplex coordinates for a multi-index}
\CommentTok{    alpha with length d+1 and sum n, based on a 1D node family.'''}
\NormalTok{    d }\OperatorTok{=} \BuiltInTok{len}\NormalTok{(alpha) }\OperatorTok{-} \DecValTok{1}
\NormalTok{    n }\OperatorTok{=} \BuiltInTok{sum}\NormalTok{(alpha)}
\NormalTok{    xn }\OperatorTok{=}\NormalTok{ family[n]}
\NormalTok{    b }\OperatorTok{=}\NormalTok{ np.zeros((d}\OperatorTok{+}\DecValTok{1}\NormalTok{,))}
    \ControlFlowTok{if}\NormalTok{ d }\OperatorTok{==} \DecValTok{1}\NormalTok{:}
\NormalTok{        b[:] }\OperatorTok{=}\NormalTok{ xn[[alpha[}\DecValTok{0}\NormalTok{], alpha[}\DecValTok{1}\NormalTok{]]]}
        \ControlFlowTok{return}\NormalTok{ b}
\NormalTok{    weight }\OperatorTok{=} \FloatTok{0.}
    \ControlFlowTok{for}\NormalTok{ i }\KeywordTok{in} \BuiltInTok{range}\NormalTok{(d}\OperatorTok{+}\DecValTok{1}\NormalTok{):}
\NormalTok{        alpha_noti }\OperatorTok{=}\NormalTok{ alpha[:i] }\OperatorTok{+}\NormalTok{ alpha[i}\OperatorTok{+}\DecValTok{1}\NormalTok{:]}
\NormalTok{        w }\OperatorTok{=}\NormalTok{ xn[n }\OperatorTok{-}\NormalTok{ alpha[i]]}
\NormalTok{        br }\OperatorTok{=}\NormalTok{ recursive(alpha_noti, family)}
\NormalTok{        b[:i] }\OperatorTok{+=}\NormalTok{ w }\OperatorTok{*}\NormalTok{ br[:i]}
\NormalTok{        b[i}\OperatorTok{+}\DecValTok{1}\NormalTok{:] }\OperatorTok{+=}\NormalTok{ w }\OperatorTok{*}\NormalTok{ br[i:]}
\NormalTok{        weight }\OperatorTok{+=}\NormalTok{ w}
\NormalTok{    b }\OperatorTok{/=}\NormalTok{ weight}
    \ControlFlowTok{return}\NormalTok{ b}
\end{Highlighting}
\end{Shaded}

A reference python implementation, which includes all numerical methods used to
evaluate and compare against other node families in this work, is available as
the \href{https://tisaac.gitlab.io/recursivenodes}{\texttt{recursivenodes}} package \autocite{Isaa20}. The website for the package \autocite{Isaac20b}
hosts a version of this manuscript showing how it was used to
generate all figures and tables.

\hypertarget{using-1d-families-other-than-lgl-nodes}{%
\section{Using 1D families other than LGL nodes}\label{using-1d-families-other-than-lgl-nodes}}

The analysis and comparison in Section~\ref{comparison} was conducted
under the assumption that the 1D node family \(\XX\) was \(\XX_{\text{LGL}}\),
the Lobatto-Gauss-Legendre nodes on the interval \([0,1]\). The recursive
rule \eqref{eq:recursive} allows for an arbitrary 1D node family. While
\(\XX_{\text{LGL}}\) appears to be the best choice according to the metrics
in Section~\ref{comparison}, for completeness a few alternate choices of \(\XX\) are presented here.

\textbf{\(\XX_{\text{eq}}\) (equispaced):} It is not surprising, given the discussion
in Section~\ref{intuition} where equispaced nodes provided the intuition
behind the recursive rule, that using \(\XX = \XX_{\text{eq}}\) reproduces the
equispaced nodes, \(\bb_{\XX_{\text{eq}}} (\aal) = \bb_{\text{eq}}(\aal)\).

\textbf{\(\XX_{\text{LGC}}\) (Lobatto-Gauss-Chebyshev):} The 1D Lobatto-Gauss-Chebyshev (LGC) node family has
good interpolation properties in 1D while being a nested family, with
\(X_{n,\text{LGC}} \subset X_{2n,\text{LGC}}.\) The recursive nodes
\(\RR^d_{\XX_{\text{LGC}}}\) inherit the nested property, as demonstrated in
Fig.~\ref{fig:lgc}, left.

\textbf{\(\XX_{\text{GL}}\) (Gauss-Legendre):} The 1D Gauss-Legendre (GC) node family
does not include the endpoints \(0\) and \(1\), but can still be used to construct
nodes. Property (III) in Section~\ref{symmetry} does not hold, and in fact
all nodes will be in the interior of the \(d\)-simplex as demonstrated in
Fig.~\ref{fig:lgc}, right.

\begin{figure}
\centering
\includegraphics{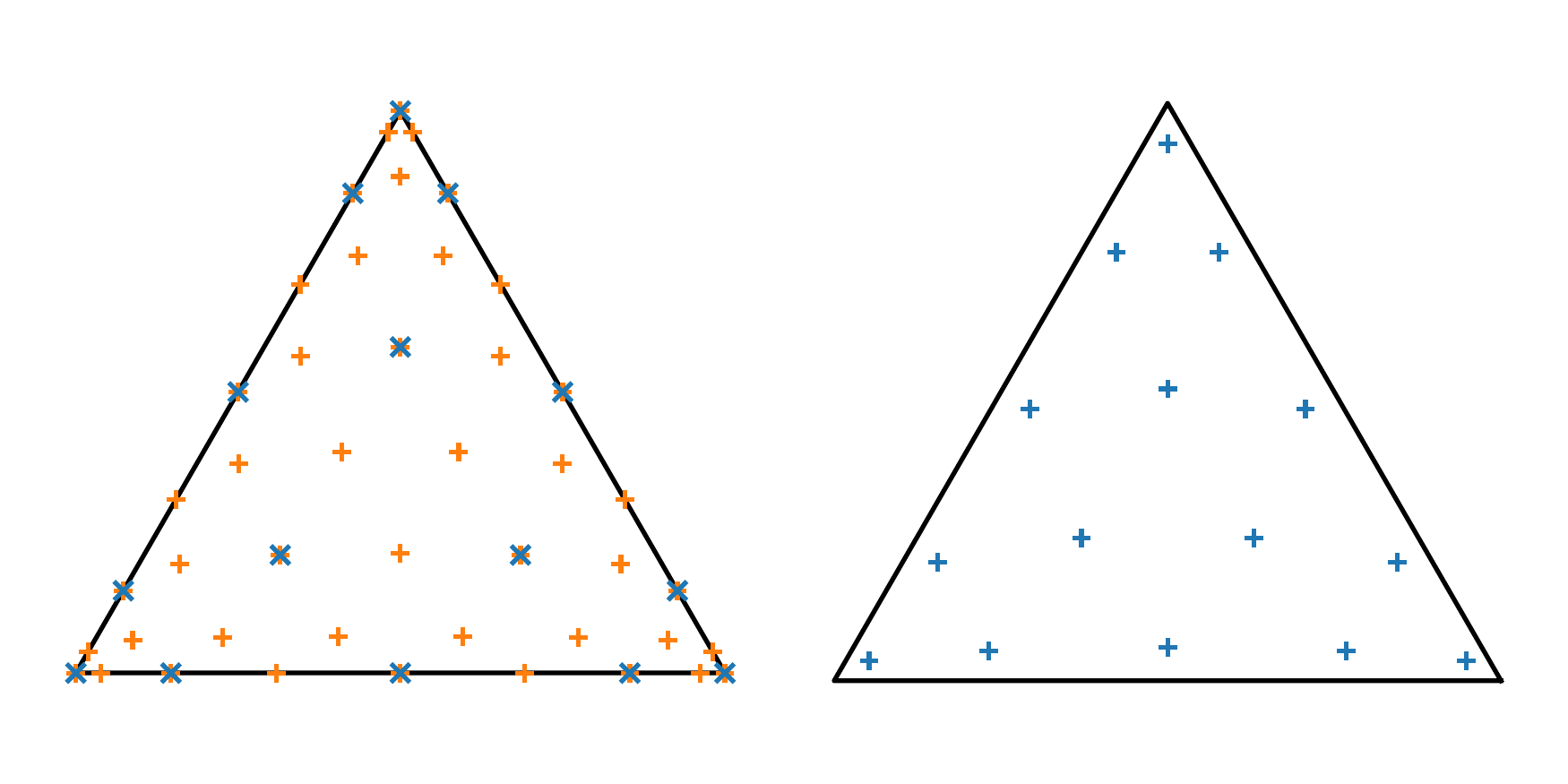}
\caption{\label{fig:lgc}(left) The node sets \(R^2_{\XX_{\text{LGC}},4}\) and
\(R^2_{\XX_{\text{LGC}},8}\), demonstrating nested node sets, a property
inherited from the 1D LGC node family. (right) The node set
\(R^2_{\XX_{\text{GL}},4}\), demonstrating node sets contained in the interior.}
\end{figure}

The metrics from Section~\ref{comparison}
are used to compare \(\RR^d_{\XX}\) for \(\XX_{\text{LGL}}\),
\(\XX_{\text{LGC}}\), and \(\XX_{\text{GL}}\) in
Fig.~\ref{fig:altcondition}.\footnote{The only metric omitted is the condition number
  of the nodal Laplacian matrix, where the node families show little distinction.}
The results for \(d=2\) and \(d=3\) resemble the results of the 1D node families,
with GL nodes having worse interpolation properties but better conditioned mass matrices
than either set of Lobatto nodes, and with LGC nodes having interpolation
properties and matrix condition numbers that similar but slightly worse to LGL
nodes. The most interesting trend to be observed in Fig.~\ref{fig:altcondition}
is that, while the growth rate of the Lebesgue constant for the GL nodes is worse than
the Lobatto nodes in 2D, it is much closer in 3D.

\begin{figure}
\centering
\includegraphics{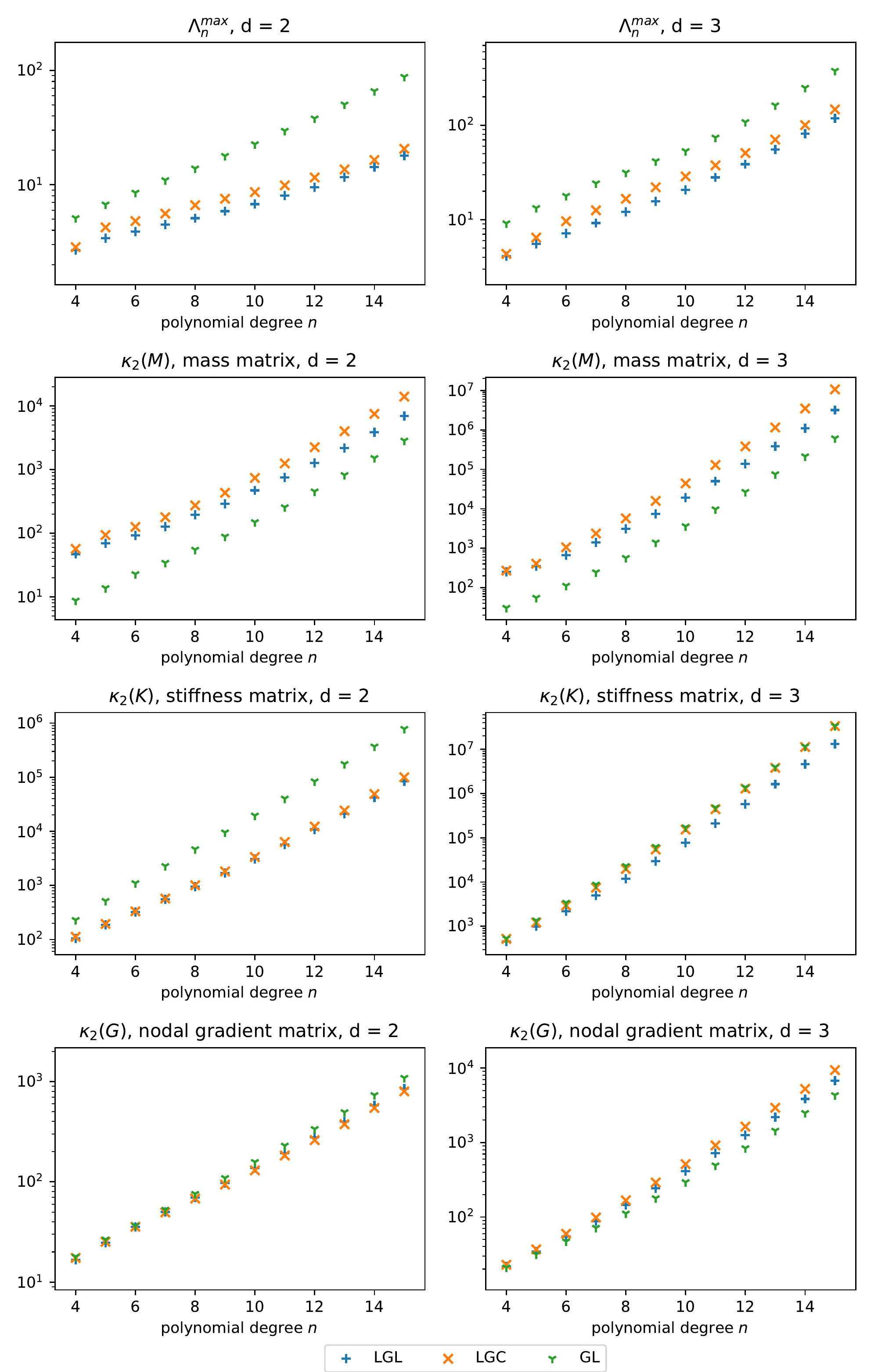}
\caption{\label{fig:altcondition}Comparing \(\RR^d_{\XX_{\text{LGL}}}\), \(\RR^d_{\XX_{\text{LGC}}}\), and \(\RR^d_{\XX_\text{GL}}\) according to metrics from Section~\ref{comparison}.}
\end{figure}

\hypertarget{interpolation-tests}{%
\section{Interpolation tests}\label{interpolation-tests}}

To show that the bounds implied by the Lebesgue constants in \ref{interp} are
in line with interpolation errors in practice, this section compares those
errors for the recursively constructed nodes against other node families for
two benchmark functions that have appeared previously.

The first function is\\
\begin{equation}
\label{eq:fA}
f_A(\xx) = \prod_{i} (x_i + 1) \cosh \left(\sum_i x_i - 1\right),
\end{equation}\\
which has appeared in \autocite{Hein05}\autocite{Warb06}\autocite{ChWa15} as an example of a smooth,
non-polynomial function to which even the equispaced polynomial interpolants
converge. In Table \ref{tab:interperr_smooth}, the absolute interpolation
errors \(\|I_X (f_A) - f_A\|_\infty\) on the biunit simplex is tested for node
families that appeared in Section \ref{comparison}.

The next is the ``Witch of Agnesi'' function,\\
\begin{equation}
\label{eq:fB}
f_B(\xx) = \frac{1}{1 + \alpha |\xx|^2},
\end{equation}\\
which for \(\alpha = 25\) is the classic Runge function, for which the equispaced
interpolants diverge in some domains. Table \ref{tab:interperr_runge} reports
\(\|I_X (f_B) - f_B\|_\infty\) on an equilateral simplex centered at
the origin with edge length 2. For \(d=2\), the standard \(\alpha = 25\) is used, but
for \(d=3\), \(\alpha = 60\) is used instead to make errors of the equispaced interpolants
roughly the same as for \(d=2\).

\begin{longtable}[]{@{}rrrrrrr@{}}
\caption{Interpolation errors \(\|I_X(f_A) - f_A\|_\infty\)\label{tab:interperr_smooth}}\tabularnewline
\toprule
\(d\) & \(n\) & equispaced & BLP & warp \& blend & RSV-lebgls & \({\RR}^d_{\XX}\)\tabularnewline
\midrule
\endfirsthead
\toprule
\(d\) & \(n\) & equispaced & BLP & warp \& blend & RSV-lebgls & \({\RR}^d_{\XX}\)\tabularnewline
\midrule
\endhead
2 & 6 & 3.6e-04 & 2.6e-04 & 2.4e-04 & 2.4e-04 & 2.2e-04\tabularnewline
2 & 9 & 2.7e-07 & 2.4e-07 & 1.7e-07 & 1.6e-07 & 1.6e-07\tabularnewline
2 & 12 & 7.9e-11 & 7.3e-11 & 3.6e-11 & 1.5e-11 & 3.6e-11\tabularnewline
2 & 15 & 6.2e-14 & 1.5e-14 & 8.4e-15 & 5.1e-15 & 8.7e-15\tabularnewline
2 & 18 & 4.1e-13 & 1.3e-14 & 1.6e-14 & 6.0e-15 & 4.9e-15\tabularnewline
3 & 6 & 1.1e-03 & 8.4e-04 & 8.1e-04 & - & 7.8e-04\tabularnewline
3 & 9 & 9.5e-07 & 1.6e-06 & 1.3e-06 & - & 1.1e-06\tabularnewline
3 & 12 & 4.0e-10 & 1.1e-09 & 7.4e-10 & - & 4.6e-10\tabularnewline
3 & 15 & 1.2e-13 & 3.6e-13 & 1.8e-13 & - & 9.0e-14\tabularnewline
3 & 18 & 6.3e-13 & 9.9e-14 & - & - & 4.6e-14\tabularnewline
\bottomrule
\end{longtable}

\begin{longtable}[]{@{}rrrrrrr@{}}
\caption{Interpolation errors \(\|I_X(f_B) - f_B\|_\infty\)\label{tab:interperr_runge}}\tabularnewline
\toprule
\(d\) & \(n\) & equispaced & BLP & warp \& blend & RSV-lebgls & \({\RR}^d_{\XX}\)\tabularnewline
\midrule
\endfirsthead
\toprule
\(d\) & \(n\) & equispaced & BLP & warp \& blend & RSV-lebgls & \({\RR}^d_{\XX}\)\tabularnewline
\midrule
\endhead
2 & 6 & 4.5e-01 & 3.0e-01 & 3.1e-01 & 3.0e-01 & 3.1e-01\tabularnewline
2 & 9 & 6.6e-01 & 2.4e-01 & 1.7e-01 & 1.7e-01 & 1.7e-01\tabularnewline
2 & 12 & 1.1e+00 & 2.6e-01 & 9.8e-02 & 7.9e-02 & 9.9e-02\tabularnewline
2 & 15 & 1.9e+00 & 3.0e-01 & 6.2e-02 & 4.4e-02 & 6.8e-02\tabularnewline
2 & 18 & 3.1e+00 & 3.5e-01 & 2.7e-01 & 2.3e-02 & 4.9e-02\tabularnewline
3 & 6 & 6.5e-01 & 6.9e-01 & 7.1e-01 & - & 7.4e-01\tabularnewline
3 & 9 & 4.1e-01 & 4.9e-01 & 5.1e-01 & - & 5.6e-01\tabularnewline
3 & 12 & 1.0e+00 & 1.6e+00 & 7.7e-01 & - & 2.3e-01\tabularnewline
3 & 15 & 1.9e+00 & 2.4e+00 & 9.0e-01 & - & 1.4e-01\tabularnewline
3 & 18 & 4.5e+00 & 4.3e+00 & - & - & 1.3e-01\tabularnewline
\bottomrule
\end{longtable}

The relative sizes of the interpolation errors in Tables
\ref{tab:interperr_smooth} and \ref{tab:interperr_runge} more or less
correspond to the relative sizes of the Lebesgue constants. For the more
difficult function \(f_B\) the recursive nodes \(\RR^3_{\XX}\) continue to converge
at \(n=18\) when the other node sets have already begun to diverge.

\hypertarget{conclusion}{%
\section{Conclusion}\label{conclusion}}

How and by whom should the nodes \(\RR^d_{\XX}\) defined by the recursive rule
\eqref{eq:recursive} be used? The comparisons in this paper have made the case
that is the best explicit construction rule thus far, because of its
simplicity, its symmetry, and its performance in the metrics that matter to
finite element construction (but not for producing asymptotically convergent
interpolants). It does not outperform the Warburton's \emph{warp \& blend} node
family in 2D, so software already using those would not benefit from switching,
but its performance is superior to all other explicit node families in 3D,
particularly for \(n \geq 7\). Likewise, where implicitly defined node
families---such as Rapetti, Sommariva, and Vianello's LEBGLS nodes---have been
computed and published, they are superior to the \(\RR^d_{\XX}\) node family,
especially in 2D for \(n \geq 10\). But at the time of this writing the
tetrahedron has not received nearly as much attention as the triangle, and so
this new node family is the best available in 3D.

In Section \ref{symmetry} it was argued that the edge trace property was
useful in aligning nodes with tensor-product cells in hybrid meshes. In 3D, a
further necessary condition is for the traces on triangular facets to align
with neighboring nodes pyramid cells. There is no perfect analogue for the
overdetermined projections \eqref{eq:overdetermined} in the pyramid, so the
development of a matching node construction for the pyramid is left for future
work, where it would have to be compared against existing node sets such as
those developed by \textcite{ChWa15}.

\hypertarget{acknowledgements}{%
\section{Acknowledgements}\label{acknowledgements}}

The methods in this work were discovered in the course of research funded by
grant \#DE-SC0016140 from the U.S.~Department of Energy's Office of Advanced
Scientific Research. The author also thanks the anonymous referees who
suggested additional references and improvements to this work.

\printbibliography[title=References]

\end{document}